\documentclass[letterpaper, 10 pt, conference]{ieeeconf}
\IEEEoverridecommandlockouts
\usepackage[noadjust]{cite}
\usepackage{amsmath,amssymb,amsfonts}
\usepackage{algorithmic}
\usepackage{graphicx}
\usepackage{textcomp}
\usepackage{xcolor}
\usepackage{dsfont}
\usepackage{amsthm}

\usepackage{enumitem}
\usepackage{subcaption}
\usepackage[font=small]{caption}

\usepackage{accents}

\newtheorem{theorem}{Theorem}[section]
\newtheorem{corollary}{Corollary}[theorem]
\newtheorem{lemma}[theorem]{Lemma}
\newtheorem{remark}{Remark}[section]
\newtheorem{proposition}[theorem]{Proposition}
\newtheorem{assumption}{Assumption}

\newcommand{\R}{{\mathds{R}}}

\newcommand{\edits}[1]{{\color{black}{#1}}}

\DeclareMathOperator{\sign}{sign}

\def\BibTeX{{\rm B\kern-.05em{\sc i\kern-.025em b}\kern-.08em
    T\kern-.1667em\lower.7ex\hbox{E}\kern-.125emX}}
\begin{document}

\title{
Switching transformations for \edits{decentralized} control of opinion patterns in signed networks:
application to dynamic task allocation
\thanks{Supported by ONR grant N00014-19-1-2556, ARO grant W911NF-18-1-0325, DGAPA-UNAM PAPIIT grant IN102420, and Conacyt grant A1-S-10610, and by  
NSF Graduate Research Fellowship  
DGE-2039656.}
}

\author{Anastasia~Bizyaeva, Giovanna~Amorim, Mar\'ia~Santos,
        Alessio~Franci,
        and~Naomi~Ehrich~Leonard
\thanks{A. Bizyaeva, G. Amorim, M. Santos and N.E. Leonard are with the Department
of Mechanical and Aerospace Engineering, Princeton University, Princeton,
NJ, 08544 USA; e-mail: \{bizyaeva, giamorim, maria.santos, naomi\}@princeton.edu.}
\thanks{A. Franci is with the Department of Mathematics, National Autonomous University of Mexico, 04510 Mexico City, Mexico. e-mail: afranci@ciencias.unam.mx}}

\maketitle

\begin{abstract}
We propose a new \edits{decentralized} design to control opinion patterns on signed networks of agents making decisions about two options and to switch the network from any opinion pattern to a new desired one. Our method relies on \textit{switching transformations}, which switch the sign of an agent's opinion at a stable equilibrium by flipping the sign of the interactions with its neighbors. The global dynamical behavior of the switched network can be predicted rigorously when the original, and thus the switched, networks are structurally balanced. Structural balance ensures that the network dynamics are monotone, which makes the study of the basin of attraction of the various opinion patterns amenable to monotone systems theory. We illustrate the utility of the approach through scenarios motivated by multi-robot coordination and dynamic task allocation.
\end{abstract}

\section{Introduction}
\label{sec:intro}
Modern networked technologies require decentralized mechanisms for decision-making and allocation of tasks. 
For example, systems such as smart power grids, cloud computing services, or multi-robot teams, call for strategies that dynamically distribute tasks among individual units \edits{to optimize system performance even as task requirements change or units experience failure}.  

We use the model of networked nonlinear opinion dynamics of \cite{bizyaeva2022,Gray2018} to illustrate how network interconnection topology can be designed so  a group of decision makers converges to a desired opinion pattern and how the network can be transformed so  the group switches to a desired alternative opinion pattern.
\edits{When all agents commit to the same option the network is in `agreement', while for any other opinion configuration it is in `disagreement'. The emergence of agreement and disagreement in nonlinear opinion networks has been studied in \cite{bizyaeva2022,BizyaevaACC2020,Bizyaeva2021}. However, the analysis in those works has assumed that all network interactions are either positive or negative. In this paper we add to this body of analysis by allowing mixed-sign interactions.} 


Decision-making with signed interactions has been studied on linear networks with averaging dynamics \cite{Altafini2013,Liu2017}, as well as with nonlinear consensus models \cite{fontan2017multiequilibria, fontan2021role} and biased assimilation models \cite{wang2020biased}. {\em The novelty of our approach is to use signed interactions on a network as a design tool}. Our design methodology drives a distributed system to a desired network state and allows any individual agent to respond to local contextual changes  and adjust its allocation by dynamically adjusting the sign of interaction with its neighbors. \edits{Since the strategy relies only on pairwise interactions between neighboring agents, it is decentralized and agnostic to the global topology of the network communication graph. } 

Our contributions are as follows. First, we prove that a network system  can be easily and intuitively controlled to any agreement or disagreement opinion pattern using standard tools from signed graph theory grounded in switching transformations of graphs. Second, we prove a sufficient condition for the networked state to converge to one of two available equilibrium configurations. Third, we show  how a pattern of equilibrium opinions can be changed dynamically through local updates of the network weights that follow the structure of a switching transformation. Fourth, we validate the theory 
\edits{with simulation examples.}


In Section II we introduce notation. Section III describes the opinion dynamics model and summarizes some of its properties. In Section IV we present new analysis of the model on signed graphs and propose a systematic design approach \edits{for agent allocation} across two tasks. In Section V we describe the asymptotic dynamics of trajectories on structurally balanced graphs. Section VI relates the features of the approach in the context of multi-robot task allocation. Final remarks are included in Section VII.

\section{Notation and Mathematical Preliminaries}
For any vectors $\mathbf{x} = (x_1, \dots, x_N) \in \mathds{R}^N$, $\mathbf{y} = (y_1, \dots, y_N) \in \mathds{R}^N$,  the standard Euclidean inner product is $\langle \mathbf{x},\mathbf{y} \rangle = \sum_{i = 1}^N x_i y_i$. Let $\mathbf{x} \succeq \mathbf{y}$ if $x_i \geq y_i$ for all $i = 1, \dots, N$, and  $\mathbf{x} \succ \mathbf{y}$ if $x_i > y_i$ for all $i = 1, \dots, N$. Define the operation $\odot$ as the element-wise product of vectors, $\mathbf{x} \odot \mathbf{y} = (x_1 y_1, \dots, x_N y_N)$.  $\mathbf{0}_N$  denotes the vector with all zero entries in $\mathds{R}^N$, and $\mathcal{I}_N$  the identity matrix in $\mathds{R}^{N \times N}$.

We study networks of $N$ agents with a signed communication graph $\mathcal{G} := (\mathcal{V}, \mathcal{E}, \sigma)$ where $\mathcal{V} = \{1, \dots, N \}$ is the \textit{vertex set}, $\mathcal{E}$ is the \textit{edge set}, and $\sigma: \mathcal{E} \to \{1,-1\}$ is a \textit{sign function} or \textit{signature} of the graph $\mathcal{G}$. We use the sensing convention such that $e_{ik} \in \mathcal{E}$ denotes a directed edge in $\mathcal{G}$ that points from vertex $i$ to vertex $k$, indicating that $k$ is a neighbor of $i$. We assume that the unsigned directed graph $\Gamma = (\mathcal{V}, \mathcal{E})$ underlying $\mathcal{G}$ is \textit{simple}, i.e. contains no self-loops $e_{ii} \not\in \mathcal{E}$ for all $i \in \mathcal{V}$, and there is at most one edge $e_{ik}$ in $\mathcal{E}$ that begins at vertex $i$ and ends at vertex $k$ for all $i,k \in \mathcal{V}$. We say that the graph $\mathcal{G}$ is \textit{strongly connected} if the edges contained in $\mathcal{E}$ form a path between any two nodes. 

Define $A = (a_{ik})$ to be the $N \times N$ \textit{signed adjacency matrix} of 
$\mathcal{G}$ whose entries $a_{ik} \in \{0,1,-1\}$ satisfy $a_{ik} = 0$ if $e_{ik} \not \in \mathcal{E}$ and $a_{ik} = \sigma(e_{ik})$ if $e_{ik} \in \mathcal{E}$. 
We use the symbol $\lambda^*$ to distinguish, when it exists, the real and unique eigenvalue of $A$ that satisfies $\operatorname{Re}(\lambda^*) > \operatorname{Re}(\lambda_i)$ for all eigenvalues $\lambda_i \neq \lambda^*$ of $A$. We denote the right and left eigenvectors of $A$ corresponding to $\lambda^*$ as $\mathbf{v}^*$ and $\mathbf{w}^*$, respectively. We always assume $\mathbf{v}^*, \mathbf{w}^*$ are normalized to satisfy $\langle \mathbf{w}^*,\mathbf{v}^*\rangle = 1$. 
We adapt the statement of the standard Perron-Frobenius theorem, e.g. as presented in \cite{strang2016}, to specialize to adjacency matrices of graphs with an all-positive signature. 
 
\begin{proposition}[Perron-Frobenius] \label{prop:PF}
Suppose $\sigma(e_{ik}) = 1$ for all $e_{ik} \in \mathcal{E}$ for some strongly connected graph $\mathcal{G}$. Then the following hold: 1) $\lambda^*$ exists; 2) $\lambda^* > 0$; 3) we can choose $\mathbf{v}^*,\mathbf{w}^*$ to satisfy $\mathbf{v}^* \succ \mathbf{0}_N$ and $\mathbf{w}^* \succ \mathbf{0}_N$.
\end{proposition}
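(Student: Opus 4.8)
The plan is to reduce the statement to the classical Perron--Frobenius theorem for irreducible nonnegative matrices, which is the exact form cited in \cite{strang2016}. First I would observe that when $\sigma(e_{ik}) = 1$ for every edge, the signed adjacency matrix $A$ has entries in $\{0,1\}$, so $A$ is entrywise nonnegative. Next I would invoke the standard equivalence between strong connectivity of $\mathcal{G}$ and irreducibility of $A$: the $(i,k)$ entry of $A^m$ counts directed walks of length $m$ from $i$ to $k$, so strong connectivity guarantees that for each pair $(i,k)$ some power of $A$ has a positive $(i,k)$ entry, which is precisely the defining property of an irreducible nonnegative matrix. This pair of observations converts the graph-theoretic hypotheses into the matrix-theoretic hypotheses of the classical theorem.

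With irreducibility in hand, the classical theorem supplies the Perron root $\rho(A)$: it is real, simple, and strictly positive, it admits a strictly positive right eigenvector, and every eigenvalue $\lambda$ of $A$ obeys $|\lambda| \le \rho(A)$. Setting $\lambda^* := \rho(A)$ immediately gives claim 2, because a strongly connected graph on $N \ge 2$ vertices contains at least one edge, so $A \ne 0$ and hence $\rho(A) > 0$. For claim 3, the strictly positive right eigenvector is $\mathbf{v}^* \succ \mathbf{0}_N$; applying the same theorem to $A^\top$, whose associated digraph is the edge-reversal of $\mathcal{G}$ and is therefore also strongly connected, produces a strictly positive left eigenvector $\mathbf{w}^* \succ \mathbf{0}_N$. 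Since both are strictly positive, $\langle \mathbf{w}^*, \mathbf{v}^* \rangle > 0$, so the normalization $\langle \mathbf{w}^*, \mathbf{v}^* \rangle = 1$ is attainable by rescaling one of them.

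The step requiring an argument beyond merely quoting the theorem is the \emph{real-part} characterization of $\lambda^*$ in claim 1, since the classical statement controls the modulus rather than the real part. Here I would note that for any eigenvalue $\lambda \ne \lambda^*$ we have $\operatorname{Re}(\lambda) \le |\lambda| \le \rho(A) = \lambda^*$, and that equality $\operatorname{Re}(\lambda) = \lambda^*$ would force both inequalities to be equalities, yielding $\lambda = |\lambda| = \rho(A) = \lambda^*$, a contradiction. Hence $\operatorname{Re}(\lambda) < \lambda^*$ strictly, establishing both the existence and the uniqueness of an eigenvalue of maximal real part. I expect this to be the only genuinely delicate point: when $A$ is imprimitive (e.g. a directed cycle) there do exist other eigenvalues of the \emph{same} modulus $\rho(A)$, so the dominance asserted in claim 1 cannot be read off from modulus alone. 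The inequality $\operatorname{Re}(\lambda) \le |\lambda|$, together with simplicity of the Perron root, nevertheless resolves this uniformly, so the obstacle is conceptual rather than computational.
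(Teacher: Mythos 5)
Your proposal is correct and takes essentially the same route as the paper, which states this proposition as a direct specialization of the classical Perron--Frobenius theorem for irreducible nonnegative matrices (citing \cite{strang2016}) and offers no separate proof. Your closing argument---that $\operatorname{Re}(\lambda)\le|\lambda|\le\rho(A)$ with equality throughout forcing $\lambda=\rho(A)$---correctly supplies the one detail the paper leaves implicit, namely that dominance holds in real part and not merely in modulus even when $A$ is imprimitive.
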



\section{Nonlinear Opinion Dynamics Model}
The evolution of the opinion of $N$ agents on a signed network
choosing between two options is modeled in this paper according to the continuous-time multi-agent, multi-option nonlinear opinion dynamics model in \cite{bizyaeva2022}.

Let $x_{i} \in \R$ denote the opinion of agent $i$, where the magnitude of $x_{i}$ determines the agent's commitment to one of the two options such that a stronger (weaker) commitment to an option corresponds to a larger (smaller) $|x_{i}|$. If $x_{i} = 0$, the agent is said to be unopinionated, and, if $x_{i} > 0\ (< 0)$, agent $i$ prefers option 1 (option 2). 
We define the opinion state of the network as $\mathbf{x} = (x_{1}, ..., x_{N}) \in \R^{N}$, with $\mathbf{x} = \mathbf{0}_N$ being the neutral state of the group. The network is in an agreement state when sign($x_{i}$) = sign($x_{k}$) for all $i$, $k \in \{1, ..., N\}$ (i.e., if all the agents commit to the same option), and in a disagreement state \edits{otherwise}.

\edits{The evolution of agent $i$'s opinion is determined by a linear damping term and a saturated network interaction term}
\begin{equation}
\dot{x}_{i} = - d\,x_{i} +  u_{i} S\left(\alpha x_{i} + \gamma \textstyle\sum_{\substack{k=1 \\ k \neq i}}^N {a}_{ik} x_{k}\right),
\label{eq:opinion_dynamics_hom}
\end{equation}
\edits{where $d>0$ is the damping coefficient, $u_i>0$ regulates the relative strength of the two terms, and
the odd saturating function $S : \R \rightarrow \R$  acts on network interactions. Furthermore, $S$ satisfies} $S(0) = 0 $, $S'(0) = 1$, $\sign(S''(x)) = -\sign(x)$.\footnote{\edits{The presence of non-smooth (piece-wise linear) saturation functions can be tackled using methods from non-smooth analysis~\cite{leenaerts1998piecewise} and recent bifurcation-theoretical tools for linear complementarity systems~\cite{miranda2021equivalence}.}} Network interactions comprise self-reinforcement interactions, weighted by $\alpha \geq 0$, and neighbor interactions, weighted by $\gamma > 0$. \edits{The sign of network interactions is determined by the signed adjacency weight $a_{ik} \in \{0,1,-1\}$. Agent} $i$ cooperates (competes) with agent $k$ when 
$a_{ik} = 1 \ (= -1)$ and is indifferent to agent $k$'s opinion when $a_{ik} = 0$. \edits{In vector form, dynamics \eqref{eq:opinion_dynamics_hom} are
\begin{equation} \label{eq:op-dyn-matrix}
  \dot{\mathbf{x}} =   - d \, \mathbf{x} + U S(\alpha \mathbf{x} + \gamma A \mathbf{x}),
\end{equation}
where  $S(\mathbf{y}) := (S(y_1), \dots, S(y_N))$ for any  $\mathbf{y} \in \mathds{R}^N$ and $U = \operatorname{diag}(u_1, \dots, u_N)$.} 


\subsection{Network opinion formation through bifurcation}

The following proposition, adapted from \cite[Theorem IV.1]{bizyaeva2022} and \cite[Theorem IV.1]{Bizyaeva2021} and stated without proof, summarizes several key features of the opinion dynamics \eqref{eq:opinion_dynamics_hom}. 

\begin{proposition}[Opinion formation as a pitchfork bifurcation]\label{prop:pitchfork} Consider \eqref{eq:opinion_dynamics_hom} on a graph $\mathcal{G}$ with $u_i = u \geq 0$, \edits{$\alpha \geq 0$, $\gamma > 0$, $d>0$} for all $i = 1, \dots, N$, and assume a simple, real largest eigenvalue $\lambda^*$ exists. Suppose $ \alpha + \gamma \lambda^* > 0$ and $\langle \mathbf{w}^*, (\mathbf{v}^*)^3 \rangle > 0$, where ($\mathbf{v}^*)^3 = \mathbf{v}^* \odot \mathbf{v}^* \odot \mathbf{v}^*$. Then 1) for $0 \leq u < u^* := d/(\alpha + \gamma \lambda^*)$, the neutral equilibrium $\mathbf{x} = \mathbf{0}_N$ is locally exponentially stable; 2) for $u > u^*$,  $\mathbf{x} = \mathbf{0}_N$ is unstable, and two branches of locally exponentially stable equilibria $\mathbf{x}=\mathbf{x}_1^*$, $\mathbf{x}_2^*$ branch off from $(\mathbf{x},u) = (\mathbf{0}_N,u^*)$ along a manifold tangent at $\mathbf{x}=\mathbf{0}_N$ to $\operatorname{span}(\mathbf{v}^*)$. The two nonzero equilibria differ by a sign, i.e. $\mathbf{x}_2^* = -\mathbf{x}_1^*$.
\end{proposition}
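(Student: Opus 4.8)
The plan is to treat this as a $\mathbb{Z}_2$-equivariant pitchfork bifurcation, combining a linear stability computation at the origin with a one-dimensional center manifold reduction at the bifurcation value. First I would linearize \eqref{eq:op-dyn-matrix} with $U = u\mathcal{I}_N$ about $\mathbf{x} = \mathbf{0}_N$. Since $S'(0) = 1$ and $A$ has zero diagonal, the Jacobian is $J(u) = (u\alpha - d)\mathcal{I}_N + u\gamma A$, whose eigenvalues are $-d + u(\alpha + \gamma\lambda_i)$ as $\lambda_i$ ranges over the spectrum of $A$. Because $\lambda^*$ has the largest real part, the rightmost eigenvalue is $\nu(u) := -d + u(\alpha + \gamma\lambda^*)$, which is negative precisely for $u < u^* = d/(\alpha + \gamma\lambda^*)$ (using $\alpha + \gamma\lambda^* > 0$) and positive for $u > u^*$. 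This immediately yields local exponential stability of the origin in claim 1) and its instability in claim 2).

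Next I would exploit the oddness of $S$, which makes the vector field $f(\mathbf{x}) := -d\mathbf{x} + uS(\alpha\mathbf{x} + \gamma A\mathbf{x})$ equivariant under $\mathbf{x} \mapsto -\mathbf{x}$: indeed $S(-\mathbf{y}) = -S(\mathbf{y})$ implies $f(-\mathbf{x}) = -f(\mathbf{x})$. This $\mathbb{Z}_2$ symmetry is what forces a pitchfork (rather than a transcritical) bifurcation at $u = u^*$ and guarantees that the bifurcating equilibria come in the sign-symmetric pair $\mathbf{x}_2^* = -\mathbf{x}_1^*$. At $u = u^*$ the eigenvalue $\nu(u^*) = 0$ is simple (because $\lambda^*$ is), with right and left eigenvectors $\mathbf{v}^*$, $\mathbf{w}^*$, so the center eigenspace is $\operatorname{span}(\mathbf{v}^*)$.

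I would then reduce to this one-dimensional center manifold, writing $\mathbf{x} = z\mathbf{v}^* + \mathbf{h}(z,u)$ with $\mathbf{h}$ transverse to $\mathbf{v}^*$ and projecting with $\mathbf{w}^*$ (normalized so $\langle \mathbf{w}^*, \mathbf{v}^*\rangle = 1$). Using $A\mathbf{v}^* = \lambda^*\mathbf{v}^*$, the argument of $S$ along the center direction is $z(\alpha + \gamma\lambda^*)\mathbf{v}^*$, and expanding $S(y) = y + \tfrac{1}{6}S'''(0)y^3 + O(y^5)$ (with $S''(0) = 0$ by oddness) gives the reduced equation $\dot{z} = \nu(u)z + C z^3 + O(z^5)$, where $C = \tfrac{1}{6}u^* S'''(0)(\alpha + \gamma\lambda^*)^3\langle \mathbf{w}^*, (\mathbf{v}^*)^3\rangle$. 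Equivariance ensures there are no quadratic terms and that $\mathbf{h}$ is odd in $z$, hence $O(z^3)$, so it does not affect this cubic coefficient. The hypothesis $\sign(S''(x)) = -\sign(x)$ gives $S'''(0) \le 0$, and together with $\alpha + \gamma\lambda^* > 0$ and $\langle \mathbf{w}^*, (\mathbf{v}^*)^3\rangle > 0$ yields $C < 0$ (assuming, as for $\tanh$, strict negativity of $S'''(0)$). This is exactly the normal form of a supercritical pitchfork: for $u > u^*$ the nonzero equilibria $z = \pm\sqrt{-\nu(u)/C}$ appear, and linearizing the scalar flow there gives slope $-2\nu(u) < 0$, so they are stable on the center manifold. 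Since the transverse spectrum of $J$ stays in the open left half-plane near $u^*$, the reduction principle upgrades this to local exponential stability in $\mathds{R}^N$, and the branches are tangent at $\mathbf{0}_N$ to $\operatorname{span}(\mathbf{v}^*)$ because the center manifold is.

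I expect the main obstacle to be the bookkeeping of the center manifold reduction: verifying cleanly that the quadratic terms and the quadratic part of $\mathbf{h}$ vanish, so that the cubic coefficient $C$ is captured solely by the projection $\langle \mathbf{w}^*, (\mathbf{v}^*)^3\rangle$, and confirming strict negativity of $S'''(0)$ from the sign hypothesis on $S''$. The $\mathbb{Z}_2$ symmetry argument is precisely what keeps this computation tractable.
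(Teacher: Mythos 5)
Your argument is correct and is essentially the standard route: the paper itself states Proposition \ref{prop:pitchfork} \emph{without proof}, adapting it from \cite[Theorem IV.1]{bizyaeva2022} and \cite[Theorem IV.1]{Bizyaeva2021}, and those works establish it by exactly the kind of $\mathbb{Z}_2$-equivariant reduction to a one-dimensional center manifold (there via Lyapunov--Schmidt) that you outline, with the same linear spectral computation for parts 1) and 2) and the same cubic coefficient proportional to $S'''(0)(\alpha+\gamma\lambda^*)^3\langle \mathbf{w}^*,(\mathbf{v}^*)^3\rangle$. The one caveat you rightly flag is real: the stated hypothesis $\sign(S''(x))=-\sign(x)$ only yields $S'''(0)\le 0$, so strict negativity (hence a nondegenerate supercritical pitchfork) is an implicit additional assumption on $S$, satisfied by $\tanh$ and made explicit in the cited source.
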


Proposition \ref{prop:pitchfork} \edits{shows that} a group of decision-makers with opinion dynamics \eqref{eq:opinion_dynamics_hom} \edits{can} break deadlock and commit to an opinionated configuration when their level of attention $u_i$ is sufficiently large. \edits{Fig. \ref{fig:siwtch_graphic} provides a graphical illustration of the equilibria and their stabilty as a function of $u$ in the form of the pitchfork bifurcation described by the proposition.} Next we state other useful properties of \eqref{eq:opinion_dynamics_hom}.  

\begin{corollary}[Sufficient condition for agreement]
When $\mathcal{G}$ is strongly connected with an all-positive signature,  conditions of Proposition \ref{prop:pitchfork} are always satisfied. For $u > u^*$, one of the two new stable equilibria satisfies $\mathbf{x}^* \succ \mathbf{0}_N$. \label{cor:agreement}
\end{corollary}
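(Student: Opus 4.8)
The plan is to establish the two assertions separately: that strong connectivity together with an all-positive signature forces the three hypotheses of Proposition~\ref{prop:pitchfork}, and that the resulting stable branch lies in the open positive orthant for every $u > u^*$.

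For the first assertion I would read the hypotheses off Proposition~\ref{prop:PF}. With $\sigma \equiv 1$ and $\mathcal{G}$ strongly connected, $A$ is an irreducible nonnegative matrix, so Perron--Frobenius yields a simple real dominant eigenvalue $\lambda^*$ with $\lambda^* > 0$ and eigenvectors that may be taken strictly positive, $\mathbf{v}^* \succ \mathbf{0}_N$ and $\mathbf{w}^* \succ \mathbf{0}_N$ (rescaling to $\langle \mathbf{w}^*, \mathbf{v}^* \rangle = 1$ preserves positivity). Then $\alpha + \gamma \lambda^* > 0$ is immediate from $\alpha \geq 0$, $\gamma > 0$, $\lambda^* > 0$, and since each entry of $(\mathbf{v}^*)^3$ and of $\mathbf{w}^*$ is strictly positive, $\langle \mathbf{w}^*, (\mathbf{v}^*)^3 \rangle = \sum_i w_i^* (v_i^*)^3 > 0$. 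This part is routine.

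The content is in the second assertion. Near the bifurcation, Proposition~\ref{prop:pitchfork} gives two stable branches tangent to $\operatorname{span}(\mathbf{v}^*)$; writing one of them as $\mathbf{x}_1^*(u) = \varepsilon \mathbf{v}^* + o(\varepsilon)$ with $\varepsilon > 0$, positivity of $\mathbf{v}^*$ places this branch in the open positive orthant for $u$ slightly above $u^*$. To promote this to all $u > u^*$ I would use the cooperative structure of \eqref{eq:op-dyn-matrix}: because $a_{ik} \geq 0$ and $S$ is monotone, the off-diagonal Jacobian entries $u\,S'(\cdot)\,\gamma a_{ik}$ are nonnegative, so the system is monotone, and the closed positive orthant is forward invariant, since on the face $x_i = 0$, $x_k \geq 0$ one has $\dot{x}_i = u\,S(\gamma \sum_{k} a_{ik} x_k) \geq 0$. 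I would then prove the structural lemma that every nonzero equilibrium in the closed positive orthant is strictly positive: if $\mathbf{x}^* \succeq \mathbf{0}_N$ is an equilibrium with $x_i^* = 0$, the equilibrium equation forces $S(\gamma \sum_k a_{ik} x_k^*) = 0$, hence $\sum_k a_{ik} x_k^* = 0$ (as $S$ vanishes only at the origin); since the summands are nonnegative, $x_k^* = 0$ for every neighbor $k$ of $i$, so the zero set is closed under following edges and strong connectivity propagates $\mathbf{x}^* = \mathbf{0}_N$, a contradiction.

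With these tools, existence of the positive equilibrium follows by starting a trajectory at $\varepsilon \mathbf{v}^*$: the linearization gives $\dot{\mathbf{x}} = (\alpha + \gamma \lambda^*)(u - u^*)\varepsilon \mathbf{v}^* + o(\varepsilon) \succ \mathbf{0}_N$, so monotonicity makes the trajectory nondecreasing, while the damping $-d\mathbf{x}$ against the bounded $S$ keeps it bounded; it therefore converges to an equilibrium $\succeq \varepsilon \mathbf{v}^* \succ \mathbf{0}_N$, which the lemma shows is strictly positive and which, since $\mathbf{x}_2^* = -\mathbf{x}_1^*$, is one of the two stable branches. I expect the main obstacle to be this global step: ensuring the branch singled out by Proposition~\ref{prop:pitchfork} never returns to the origin nor exits the positive orthant as $u$ grows, and matching it with the strictly positive equilibrium produced by the monotone flow. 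The invariance and irreducibility arguments are designed precisely to close that gap, through a connectedness argument in which positivity is open and, by the lemma, closed along the continuous branch as long as it stays away from $\mathbf{0}_N$.
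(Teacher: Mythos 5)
Your proposal is correct in substance, but it does considerably more work than the paper, and the comparison is instructive. The paper's proof of this corollary is a two-line citation: Proposition~\ref{prop:PF} supplies a simple $\lambda^*>0$ with $\mathbf{v}^*,\mathbf{w}^*\succ\mathbf{0}_N$, which immediately verifies $\alpha+\gamma\lambda^*>0$ and $\langle\mathbf{w}^*,(\mathbf{v}^*)^3\rangle>0$, and positivity of the stable equilibrium is then read off from the fact that the bifurcating branch in Proposition~\ref{prop:pitchfork} is tangent to $\operatorname{span}(\mathbf{v}^*)$ with $\mathbf{v}^*\succ\mathbf{0}_N$. Your first paragraph reproduces exactly that. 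Where you diverge is in recognizing that the tangency argument only yields $\mathbf{x}_1^*(u)\succ\mathbf{0}_N$ for $u$ slightly above $u^*$, and in supplying a genuinely global argument for all $u>u^*$: forward invariance of the positive orthant for the cooperative system, the irreducibility lemma that a nonzero nonnegative equilibrium must be strictly positive, and the monotone trajectory from the sub-equilibrium $\varepsilon\mathbf{v}^*$. This is a real strengthening --- the paper leaves the global step implicit --- and it is consistent with machinery the paper only introduces later (Lemma~\ref{lem:balance} gives monotonicity for structurally balanced graphs, of which the all-positive graph is the trivial case). Two small caveats in your version: the final identification of the limit of the monotone trajectory with one of the two pitchfork branches implicitly needs the uniqueness of equilibria from Corollary~\ref{cor:uniqueness}, which holds only under its stated restrictions on $u$ (e.g.\ $u<u_2$ when $\operatorname{Re}(\lambda_2)\geq-\alpha/\gamma$); and your closing connectedness argument along the branch assumes the branch $u\mapsto\mathbf{x}_1^*(u)$ remains continuous and bounded away from the origin for all $u>u^*$, which neither you nor the paper actually proves. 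Within the regime where the paper's other results apply, your argument closes a gap the paper glosses over; the paper's approach buys brevity at the cost of only literally establishing positivity near the bifurcation.
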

\begin{proof}
The corollary follows from Proposition \ref{prop:pitchfork} and Proposition \ref{prop:PF}, since  $\lambda^*$ is the Perron-Frobenius eigenvalue, and eigenvectors $\mathbf{w}^*, \mathbf{v}^*$ have all-positive entries.
\end{proof}

We next show that the equilibria predicted by Proposition \ref{prop:pitchfork} are the only equilibria admitted by dynamics \eqref{eq:opinion_dynamics_hom} for a range of values of $u$, following similar arguments as those used for Laplacian-weighted nonlinear consensus networks in \cite{fontan2017multiequilibria}. \edits{We first state a necessary lemma.}

\begin{lemma}[Boundedness] \label{lem:boundedness}
Any compact set $\Omega_{r} \subset \mathds{R}^N$ of the form $ \Omega_r = \{ \mathbf{x} \in \mathds{R}^N \ \ s.t. \ \ | x_i | < r \max_j\{u_j\} / d, \;  \forall i,j \in \mathcal{V} \} $
with $r > 1$ is forward-invariant for \eqref{eq:opinion_dynamics_hom}.
\end{lemma}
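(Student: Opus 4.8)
The plan is to establish forward-invariance by a boundary argument: I would show that on the boundary of $\Omega_r$ the vector field defined by \eqref{eq:opinion_dynamics_hom} points strictly into $\Omega_r$, and then invoke Nagumo's subtangentiality theorem (equivalently, a first-exit-time contradiction). The crucial structural fact I would exploit is that $S$ is a bounded saturating function with $|S(y)| \le 1$ for all $y \in \mathds{R}$, so that the entire network-interaction term is uniformly bounded: $|u_i S(\cdot)| \le u_i \le \max_j\{u_j\} =: M$, independent of the state. This is what makes the diagonal linear damping $-d\,x_i$ dominate once $|x_i|$ is large enough, which is precisely the threshold $M/d$ (scaled by $r$) appearing in the definition of $\Omega_r$.

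Concretely, $\Omega_r$ is the open box $\prod_{i=1}^N (-rM/d,\, rM/d)$, whose boundary is a union of faces on which some coordinate satisfies $|x_i| = rM/d$. On the face $x_i = rM/d$, I would evaluate the $i$-th component of the field: $\dot x_i = -d\,(rM/d) + u_i S(\cdot) = -rM + u_i S(\cdot) \le -rM + M = (1-r)M < 0$, using $r>1$ and $|S|\le 1$. Symmetrically, on the face $x_i = -rM/d$ one gets $\dot x_i \ge (r-1)M > 0$. In both cases the extremal coordinate is pushed back toward the interior, and because the damping term is diagonal the sign of $\dot x_i$ on the $i$-th face is controlled solely by $x_i$; hence the conditions on distinct faces, and a fortiori on the lower-dimensional edges and corners where the tangent cone is the intersection of the corresponding half-space conditions, hold simultaneously.

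Having verified strict inward pointing on every boundary face, I would conclude invariance either by Nagumo's theorem applied to the closure $\overline{\Omega_r}$, or directly: if a trajectory starting in $\Omega_r$ left the set, let $t^*$ be the first time it reaches the boundary, say with $x_i(t^*) = rM/d$; then $\dot x_i(t^*) \le (1-r)M < 0$ forces $x_i$ to have exceeded $rM/d$ just before $t^*$, contradicting that $t^*$ is the first exit (this phrasing also settles the open-versus-closed subtlety in the statement). The main obstacle, and really the only nontrivial point, is pinning down the uniform bound on $S$: the stated properties $S(0)=0$, $S'(0)=1$, $\sign(S''(x)) = -\sign(x)$ give concavity on $\mathds{R}_{>0}$ but not boundedness per se, so I would make explicit that $\sup_y |S(y)| = 1$ is part of the saturating-function hypothesis (as for $S=\tanh$); this is exactly what matches the requirement $r>1$ and renders the inequalities strict.
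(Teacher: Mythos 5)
Your argument is correct, but it is worth noting that the paper does not actually prove this lemma in-line: its entire ``proof'' is a citation to a more general boundedness result in \cite{bizyaeva2022}. What you have written is a self-contained, elementary substitute --- a face-by-face Nagumo/first-exit-time argument showing that the diagonal damping $-d x_i$ dominates the uniformly bounded interaction term $u_i S(\cdot)$ once $|x_i| \ge r \max_j\{u_j\}/d$ with $r>1$. This is almost certainly the substance behind the cited theorem, so the mathematics agrees; what your version buys is transparency and independence from the external reference, at the cost of having to make explicit a hypothesis the paper leaves implicit. On that point you are right to flag that the stated properties of $S$ (odd, $S(0)=0$, $S'(0)=1$, $\operatorname{sign}(S''(x))=-\operatorname{sign}(x)$) only yield $|S(y)|\le |y|$ and concavity on $\mathds{R}_{>0}$, not boundedness; the normalization $\sup_y|S(y)|=1$ is an unstated part of what the paper means by ``saturating'' (consistent with $S=\tanh$ in all its simulations), and it is exactly what makes the threshold $r\max_j\{u_j\}/d$ the right one. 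Your handling of the open-versus-compact wording of $\Omega_r$ and of the edges and corners of the box is also sound, since the inward-pointing conditions on distinct faces involve disjoint coordinates and hold simultaneously. No gap.
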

\begin{proof}
The lemma follows directly from the more general result \cite[Theorem A.2]{bizyaeva2022}.
\end{proof}

\begin{corollary}[Uniqueness of Equilibria] \label{cor:uniqueness}
Suppose conditions of Proposition \ref{prop:pitchfork} are satisfied, and let $\lambda_{2}$ be an eigenvalue of $A$ satisfying $\operatorname{Re}(\lambda_2) \geq \operatorname{Re}(\lambda_i)$ for all eigenvalues $\lambda_i \neq \lambda^*$ of $A$. 1) $\mathbf{x} = \mathbf{0}_N$ is globally asymptotically stable on a forward-invariant compact set $\Omega \subset \mathds{R}^n$ containing the origin $\mathbf{x} = \mathbf{0}_N$, for all $u \in [0,u^*)$; 2) when $\operatorname{Re}(\lambda_2) \geq - \alpha/\gamma$, $u \in (u^*, u_2)$, the only equilibria the system admits are $\mathbf{0}_N$, $\mathbf{x}_1^*$, and $\mathbf{x}_2^*$, where $u_2 = d/(\alpha + \gamma \operatorname{Re}(\lambda_{2}))$; 3) when $\operatorname{Re}(\lambda_2) < -\alpha/\gamma$, the only equilibria the system admits in $\Omega$ for all $u > u^*$ are $\mathbf{0}_N$, $\mathbf{x}_1^*$, and $\mathbf{x}_2^*$.
\end{corollary}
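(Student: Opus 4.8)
My plan is to recast the equilibrium condition as a nonlinear eigenvalue problem and then exploit the sigmoidal shape of $S$ together with the spectrum of $A$. Writing $M := \alpha \mathcal{I}_N + \gamma A$, an equilibrium $\mathbf{x}$ of \eqref{eq:op-dyn-matrix} is exactly a solution of $d\mathbf{x} = u\, S(M\mathbf{x})$. Since $S$ is odd with $S(0)=0$, $S'(0)=1$ and $\sign(S''(\cdot)) = -\sign(\cdot)$, the secant gain $g(z) := S(z)/z$ (with $g(0) := 1$) satisfies $0 < g(z) \le 1$, with equality only at $z=0$. Collecting these gains into $D(\mathbf{x}) := \operatorname{diag}(g((M\mathbf{x})_1),\dots,g((M\mathbf{x})_N))$, the equilibrium condition becomes the self-consistent relation $(d\mathcal{I}_N - u\,D(\mathbf{x})M)\mathbf{x} = \mathbf{0}_N$. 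Thus any nonzero equilibrium forces $d/u$ to be a real eigenvalue of $D(\mathbf{x})M$ with eigenvector $\mathbf{x}$, subject to the constraint that $D(\mathbf{x})$ be consistent with $\mathbf{x}$.

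For claim 1, I would first argue that $\mathbf{0}_N$ is the unique equilibrium for $u < u^*$. Because $D(\mathbf{x}) \preceq \mathcal{I}_N$ with strict inequality away from the origin, one expects the spectral abscissa of $D(\mathbf{x})M$ to remain below that of $M$, namely below $\alpha + \gamma\lambda^*$; since $u < u^*$ is equivalent to $d/u > \alpha + \gamma \lambda^*$, the scalar $d/u$ cannot then be an eigenvalue of $D(\mathbf{x})M$, leaving $\mathbf{x}=\mathbf{0}_N$. To upgrade uniqueness to global asymptotic stability, I would invoke the forward-invariant compact set $\Omega$ of Lemma \ref{lem:boundedness}, combine local exponential stability of $\mathbf{0}_N$ from Proposition \ref{prop:pitchfork} with the fact that it is the only equilibrium on $\Omega$, and close with a LaSalle-type argument ruling out nontrivial $\omega$-limit sets (e.g.\ periodic orbits) on $\Omega$.

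For claims 2 and 3, I would track how solutions of the nonlinear eigenvalue relation appear as $u$ grows, following the continuation/index strategy of \cite{fontan2017multiequilibria}. New branches of equilibria can emerge only when $d/u$ meets a real eigenvalue of $D(\mathbf{x})M$; the first such crossing is at $u^*$ (eigenvalue $\alpha + \gamma\lambda^*$), which by Proposition \ref{prop:pitchfork} produces exactly the pitchfork pair $\mathbf{x}_1^*$, $\mathbf{x}_2^*=-\mathbf{x}_1^*$. The next admissible crossing is governed by the second-largest real part $\operatorname{Re}(\lambda_2)$, i.e.\ at $u_2 = d/(\alpha+\gamma\operatorname{Re}(\lambda_2))$. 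Hence for $u \in (u^*,u_2)$ only $\mathbf{0}_N, \mathbf{x}_1^*, \mathbf{x}_2^*$ can exist (claim 2). When $\operatorname{Re}(\lambda_2) < -\alpha/\gamma$, one has $\alpha + \gamma\operatorname{Re}(\lambda_2) < 0$, so $u_2$ is not a positive threshold and no further crossing ever occurs; uniqueness of $\mathbf{0}_N, \mathbf{x}_1^*, \mathbf{x}_2^*$ then persists for all $u > u^*$ (claim 3), with boundedness again supplying $\Omega$.

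The main obstacle I anticipate is the global, rather than local, control of the nonlinear eigenvalue problem for a possibly non-symmetric (directed, signed) $M$. The clean monotonicity ``$D \preceq \mathcal{I}_N$ lowers the spectral abscissa'' is immediate for symmetric $M$ but not in general, and the eigenvalue-crossing count only directly governs branches connected to the trivial solution; large-amplitude equilibria disconnected from the bifurcation must be excluded separately. Bridging these gaps is where the saturation estimate $|S(z)| \le |z|$ and the invariance of $\Omega$ from Lemma \ref{lem:boundedness} do the real work, together with whatever Perron--Frobenius structure (Proposition \ref{prop:PF}) or spectral-gap information about $A$ can be brought to bear to pin down the admissible eigendirections.
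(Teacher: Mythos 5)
Your reduction to the nonlinear eigenvalue problem $d\mathbf{x} = u\,S(M\mathbf{x})$ with secant gains $g(z)=S(z)/z$ is a legitimate alternative framing, but as written it has two gaps that you yourself flag and that are exactly where the paper does the work differently. First, for claim 1 your uniqueness step rests on ``$D(\mathbf{x}) \preceq \mathcal{I}_N$ lowers the spectral abscissa of $D(\mathbf{x})M$,'' which fails in general for non-symmetric signed $M = \alpha\mathcal{I}_N + \gamma A$ (it would hold for nonnegative matrices by Perron--Frobenius monotonicity, but $A$ here is signed and directed). Moreover, your ``LaSalle-type argument ruling out nontrivial $\omega$-limit sets'' cannot be executed without an actual Lyapunov-like function: a unique, locally stable equilibrium on a compact invariant set does not preclude periodic orbits in $\mathds{R}^N$. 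The paper supplies the missing object explicitly: $V(\mathbf{x}) = \sum_i \int_0^{(\tilde A\mathbf{x})_i} S(\eta)\,d\eta$ with $\tilde A = \alpha\mathcal{I}_N+\gamma A$, whose derivative along trajectories is bounded by $-S(\tilde A\mathbf{x})^T(d\mathcal{I}_N - u\tilde A)S(\tilde A\mathbf{x}) \le 0$ for $u<u^*$ using $|S(y)|\le|y|$ and $\operatorname{sign}(S(y))=\operatorname{sign}(y)$; LaSalle then confines trajectories to $\mathcal{N}(\tilde A)$, on which the dynamics reduce to $\dot{\mathbf{x}}=-d\mathbf{x}$. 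This simultaneously delivers uniqueness of the origin and global asymptotic stability without any spectral-abscissa monotonicity claim.

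Second, for claims 2 and 3 you correctly identify that an eigenvalue-crossing count only controls branches connected to the trivial solution and that ``large-amplitude equilibria disconnected from the bifurcation must be excluded separately'' --- but you do not supply the exclusion mechanism, and that mechanism is the heart of the paper's argument. The paper shows that for $u$ in the stated ranges the Jacobian $J(\mathbf{x}) = -d\mathcal{I}_N + u\operatorname{diag}(S'(\tilde A\mathbf{x}))\tilde A$ is Hurwitz, hence nonsingular, at every $\mathbf{x}\neq\mathbf{0}_N$ (following the argument of Lemma 6 of the cited work of Fontan and Altafini). Nonsingularity everywhere, combined with the implicit function theorem and the known count of exactly three equilibria just above $u^*$ from Proposition \ref{prop:pitchfork}, forces the number of equilibria to stay constant on $(u^*,u_2)$: no fold can create a disconnected branch, because a newly appearing equilibrium would have to be degenerate. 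Your continuation strategy needs this (or an equivalent global degree/index argument) to close; without it, claims 2 and 3 remain unproven. If you want to salvage your nonlinear-eigenvalue route, note that the consistency constraint $d/u \in \operatorname{spec}(D(\mathbf{x})M)$ must be shown to fail for \emph{every} nonzero $\mathbf{x}$ outside the two known branches, which again requires uniform spectral control of $D(\mathbf{x})M$ over all admissible diagonal gains --- essentially the same difficulty in different clothing.
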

\begin{proof}
1) Existence of $\Omega$\edits{, and thereby boundedness of solutions of \eqref{eq:opinion_dynamics_hom},} is established in Lemma \ref{lem:boundedness}.  Define $\Tilde{A} = \alpha \mathcal{I}_N + \gamma A$ with components $\Tilde{a}_{ij}$, and let $f_{i}(\mathbf{x}) = \sum_{j=1}^N \Tilde{a}_{ij} x_{j}$. Consider the continuously differentiable function 
    $V(\mathbf{x}) = \sum_{i = 1} \int_{0}^{f_{i}(\mathbf{x})} S(\eta) d \eta.$ 
\edits{Along trajectories of \eqref{eq:opinion_dynamics_hom}, $\dot{V}(\mathbf{x}) = S(\Tilde{A}\mathbf{x})^T \tilde{A} \dot{\mathbf{x}} = S(\Tilde{A}\mathbf{x})^T \tilde{A}(-d \, \mathbf{x} + u S(\tilde{A}\mathbf{x})) 
    = - d \, S(\Tilde{A}\mathbf{x})^T \Tilde{A} \mathbf{x} + u S(\Tilde{A}\mathbf{x})^T \Tilde{A} S(\Tilde{A}\mathbf{x})  \leq - S(\Tilde{A}\mathbf{x})^T (d \mathcal{I}_N - u \tilde{A}) S(\Tilde{A} \mathbf{x})$ (using $|S(y)| \leq |y|$ and $\operatorname{sign}(S(y)) = \operatorname{sign}(y)$). Since $d \mathcal{I}_N - u \tilde{A}$ is positive definite for $u \in [0,u^*)$, }
\begin{equation}
    \edits{\dot{V}(\mathbf{x}) \leq - (d - u (\alpha + \gamma \lambda^*)) S(\Tilde{A}\mathbf{x})^T S(\Tilde{A}\mathbf{x}) \leq 0.} \label{eq:lyap_fun_deriv}
\end{equation}
The set on which \eqref{eq:lyap_fun_deriv} is exactly zero is $\mathcal{N}(\tilde{A}) = \{ \mathbf{x} \in \mathds{R}^N \ \ s.t. \ \ \tilde{A} \mathbf{x} = \mathbf{0}_N \}$. By LaSalle's invariance principle \cite[Theorem 4.4]{Khalil2002} we conclude that the trajectories $\mathbf{x}(t)$ approach the largest invariant set in $\mathcal{N}(\tilde{A})$ as $t \to \infty$. If $\mathcal{N}(\tilde{A}) = \{ \mathbf{0}_N \}$, the corollary follows trivially. Let $\mathbf{x} \in \mathcal{N}(\tilde{A})$ and suppose $\mathbf{x} \neq \mathbf{0}_N $. Then $\dot{\mathbf{x}} = - d \mathbf{x}$, i.e. all trajectories that start in $\mathcal{N}(\tilde{A})$ decay to the origin exponentially in time, and the corollary follows.  Under the assumptions on $u$ stated in 2) and 3), the Jacobian matrix $J(\mathbf{x}) = - d \mathcal{I}_{N} + u \operatorname{diag}( S'(\tilde{A} \mathbf{x}) ) \tilde{A}$ is Hurwitz for all $\mathbf{x} \in \mathds{R}^N \setminus \{ \mathbf{0}_N \}$, \edits{the proof of which} follows closely the argument presented in \cite[Lemma 6]{fontan2021role} and we omit its details. 
\edits{By Proposition \ref{prop:pitchfork}}, for values of $u$ in a small neighborhood above $u^*$ exactly three equilibria exist. Since the Jacobian is \edits{nonsingular for all $\mathbf{x}\in \Omega$ and all $u \in (u^*, u_2)$}, by the implicit function theorem, the number of equilibria remains unchanged. \end{proof}
 


\section{Switching Transformation as a Design Tool for Synthesis of Opinion Patterns \label{sec:switch}}

When the communication graph $\mathcal{G}$ contains edges with a negative signature \edits{and its adjacency matrix $A$ has a simple leading eigenvalue}, the opinion-forming bifurcation of Proposition \ref{prop:pitchfork} results in disagreement network equilibria. We describe a simple synthesis procedure for generating a signed adjacency matrix that results in a desired pattern of opinions among the decision-makers following opinion dynamics \eqref{eq:opinion_dynamics_hom}. We first introduce a few important concepts from the theory of signed graphs; \edits{for more details on the theory  
we refer the reader to 
\cite{zaslavsky1982signed} 
and \cite{zaslavsky2013matrices}.} 


\subsection{Signed graphs and switching}

Let $\mathcal{W} \subset \mathcal{V}$ be a subset of nodes on a signed graph $\mathcal{G}$. \textit{Switching} a set $\mathcal{W}$ on the graph $\mathcal{G}$ refers to a mapping of the graph $\mathcal{G}$ to  $\mathcal{G}^{\mathcal{W}} = (\mathcal{V},\mathcal{E}, \sigma_{\mathcal{W}})$ where the signature of all the edges in $\mathcal{E}$ between nodes in $\mathcal{W}$ and nodes in its complement $\mathcal{V}\setminus \mathcal{W}$ reverses sign. We introduce the \textit{switching function} $\theta: \mathcal{V} \to \{1, -1\}$, where for any $i \in \mathcal{V}$, $\theta(i) = -1$ if $i \in \mathcal{W}$ and $\theta(i) = 1$ otherwise. Then the signature of the switched graph $\mathcal{G}^{\mathcal{W}}$ is generated as  
\begin{equation}
    \sigma_{\mathcal{W}}(e_{ik}) = \theta(i) \sigma(e_{ik}) \theta(k) \label{eq:switch}
\end{equation}
for all $e_{ik} \in  \mathcal{E}$. \edits{From \eqref{eq:switch} we see that the signature update for an edge between agents $i$ and $k$ depends only on their membership in the switching set $\mathcal{W}$. Thus, the edges between $i$ and $k$ flip sign if and only if exactly one of $i,k$ is in the switching set $\mathcal{W}$, and does not change sign if $i,k$ are both in $\mathcal{W}$ or in $\mathcal{V} \setminus \mathcal{W}$. } Importantly, switching a set $\mathcal{W}$ all at once generates the same graph $\mathcal{G}^{\mathcal{W}}$ as sequentially switching individual vertices in $\mathcal{W}$. If $\mathcal{G}$ can be transformed into $\mathcal{G}^{\mathcal{W}}$ by switching,  $\mathcal{G}$ and $\mathcal{G}^{\mathcal{W}}$ are \textit{switching equivalent graphs}. 


Let $\theta$ be the function for switching from  
graph $\mathcal{G}$ to $\mathcal{G}^{\mathcal{W}}$, with adjacency matrices  $A$ and $A^{\mathcal{W}}$, respectively. Define the \textit{switching matrix} $\Theta = \operatorname{diag}(\theta(1), \theta(2), \dots, \theta(N))$. The adjacency matrices of $\mathcal{G}$ and its switching $\mathcal{G}^{\mathcal{W}}$ are related as
\begin{equation}
    A^{\mathcal{W}} = \Theta^{-1} A \Theta. \label{eq:switching_transf}
\end{equation}
\edits{Since $\Theta$  is diagonal and $\theta(i) = \pm 1$}, $\Theta^{-1}=\Theta$. We refer to \eqref{eq:switching_transf} as a \textit{switching transformation} of the adjacency matrix $A$, and  $A$ and $A^{\mathcal{W}}$ as \textit{switching equivalent adjacency matrices}. 
\begin{proposition}\label{prop:switch_spectrum}
Suppose $\mathcal{G}$, $\mathcal{G}^{\mathcal{W}}$ are switching equivalent with adjacency matrices $A$ and $A^{\mathcal{W}}$ and  associated switching matrix $\Theta$. Then 1) $A$ and $A^{\mathcal{W}}$ are \edits{iso}spectral, i.e. have the same set of eigenvalues; 2) $\mathbf{v}$ ($\mathbf{w})$ is a right (left) eigenvector of $A$ corresponding to  eigenvalue $\lambda$ if and only if $\Theta \mathbf{v}$ ($\Theta \mathbf{w}$) is a right (left) eigenvector of $A^{\mathcal{W}}$ with the same eigenvalue.  
\end{proposition}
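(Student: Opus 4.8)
The plan is to recognize that \eqref{eq:switching_transf} presents $A^{\mathcal{W}} = \Theta^{-1} A \Theta$ as a \emph{similarity transformation}, so both claims reduce to standard facts about similar matrices, sharpened by the special structure of $\Theta$: namely $\Theta^{-1} = \Theta$ (equivalently $\Theta^2 = \mathcal{I}_N$), already noted in the text, together with $\Theta^T = \Theta$, which holds because $\Theta$ is diagonal. These two involutive properties are the only algebraic facts I need.

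For part 1, I would simply invoke invariance of the characteristic polynomial under similarity. Writing $\det(A^{\mathcal{W}} - \lambda \mathcal{I}_N) = \det\big(\Theta^{-1}(A - \lambda \mathcal{I}_N)\Theta\big) = \det(A - \lambda \mathcal{I}_N)$, the spectra of $A$ and $A^{\mathcal{W}}$ coincide, giving isospectrality. For the right-eigenvector statement in part 2, I would start from $A \mathbf{v} = \lambda \mathbf{v}$ and compute directly
\begin{equation*}
A^{\mathcal{W}}(\Theta \mathbf{v}) = \Theta^{-1} A \Theta \, \Theta \mathbf{v} = \Theta^{-1} A \mathbf{v} = \lambda \, \Theta^{-1} \mathbf{v} = \lambda \, \Theta \mathbf{v},
\end{equation*}
using $\Theta^2 = \mathcal{I}_N$ in the second equality and $\Theta^{-1} = \Theta$ in the last. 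The converse direction of the ``if and only if'' follows by symmetry: since $\Theta$ is an involution, $A = \Theta A^{\mathcal{W}} \Theta$, so exchanging the roles of $A$ and $A^{\mathcal{W}}$ (and of $\mathbf{v}$ and $\Theta \mathbf{v}$) recovers the original eigenpair. The left-eigenvector claim is handled analogously: from $\mathbf{w}^T A = \lambda \mathbf{w}^T$ and $\Theta^T = \Theta$,
\begin{equation*}
(\Theta \mathbf{w})^T A^{\mathcal{W}} = \mathbf{w}^T \Theta \, \Theta^{-1} A \Theta = \mathbf{w}^T A \Theta = \lambda \, \mathbf{w}^T \Theta = \lambda \, (\Theta \mathbf{w})^T .
\end{equation*}

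Since every step is elementary matrix algebra, I do not expect a genuine obstacle here. The only points requiring care are keeping track of the transposes in the left-eigenvector case and ensuring that \emph{both} directions of the biconditional are covered; the involutivity $\Theta^2 = \mathcal{I}_N$ delivers the converse for free, so a single computation in each case suffices.
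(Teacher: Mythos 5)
Your proposal is correct and takes essentially the same route as the paper, which simply cites the standard properties of the similarity transformation $A^{\mathcal{W}} = \Theta^{-1} A \Theta$; you merely spell out the characteristic-polynomial and eigenvector computations that the paper leaves implicit. The explicit use of $\Theta^2 = \mathcal{I}_N$ and $\Theta^T = \Theta$ to handle both directions of the biconditional and the left-eigenvector case is exactly the content the paper's one-line proof is appealing to.
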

\begin{proof}
The proposition follows from the standard properties of a matrix similarity transformation, since  $A$ and $A^{\mathcal{W}}$ are related through a similarity transformation \eqref{eq:switching_transf}. 
\end{proof}
Proposition \ref{prop:switch_spectrum} implies that the eigenvectors of the switched adjacency matrix $A^{\mathcal{W}}$ are obtained from the eigenvectors of the original adjacency matrix $A$ by flipping the sign of each entry that corresponds to a node which is being switched. We will take advantage of this observation in our design of nonlinear opinion patterns on a network. 

\subsection{Nonlinear opinion patterns on switch equivalent graphs\label{sec:switch_patterns}}

In this section we show that a switching transformation of the nonlinear opinion dynamics \eqref{eq:opinion_dynamics_hom} is effectively a coordinate change, and two switching equivalent networks generate topologically equivalent flow and bifurcation diagrams. 

\begin{theorem}[Diffeomorphism between trajectories of switching equivalent systems]\label{thm:switch_eq}
Consider switching equivalent graphs $\mathcal{G}$, $\mathcal{G}^{\mathcal{W}}$ with adjacency matrices $A$ and $A^{\mathcal{W}}$ and with switching matrix $\Theta$. The trajectory $\mathbf{x}(t)$ is a solution to \eqref{eq:opinion_dynamics_hom} on $\mathcal{G}$ if and only if $\Theta \mathbf{x}(t)$ is a solution of \eqref{eq:opinion_dynamics_hom} on $\mathcal{G}^{\mathcal{W}}$.
\end{theorem}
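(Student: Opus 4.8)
The plan is to verify the claim directly by substitution, exploiting the vector form \eqref{eq:op-dyn-matrix} of the dynamics together with three structural facts about the switching matrix $\Theta = \operatorname{diag}(\theta(1),\dots,\theta(N))$: that $\Theta$ is an involution ($\Theta^2 = \mathcal{I}_N$, since each $\theta(i) = \pm 1$), that $\Theta$ commutes with every diagonal matrix and in particular with $U$, and the relation $A^{\mathcal{W}} = \Theta A \Theta$ coming from \eqref{eq:switching_transf}. First I would set $\mathbf{y}(t) := \Theta \mathbf{x}(t)$, so that $\dot{\mathbf{y}} = \Theta \dot{\mathbf{x}}$, and then check whether $\mathbf{y}$ satisfies \eqref{eq:op-dyn-matrix} with $A$ replaced by $A^{\mathcal{W}}$.

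The computation proceeds by first simplifying the argument of the saturation in the switched dynamics. Substituting $\mathbf{y} = \Theta\mathbf{x}$ and $A^{\mathcal{W}} = \Theta A \Theta$, the argument collapses to
\begin{equation*}
\alpha \mathbf{y} + \gamma A^{\mathcal{W}} \mathbf{y} = \alpha \Theta \mathbf{x} + \gamma \Theta A \Theta^2 \mathbf{x} = \Theta(\alpha \mathbf{x} + \gamma A \mathbf{x}),
\end{equation*}
where the last equality uses $\Theta^2 = \mathcal{I}_N$. The crux is then to pull $\Theta$ back out through $S$. Here I would invoke the fact that $S$ is odd (a consequence of $S(0)=0$ together with $\sign(S''(x)) = -\sign(x)$): since each diagonal entry of $\Theta$ is $\pm 1$, applying $S$ componentwise gives $S(\Theta \mathbf{z}) = \Theta S(\mathbf{z})$ for every $\mathbf{z} \in \mathds{R}^N$. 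This commutation of $\Theta$ with the nonlinearity is the single key observation on which the whole result rests.

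With this in hand the remainder is bookkeeping: the switched right-hand side becomes $-d\,\Theta\mathbf{x} + U \Theta\, S(\alpha\mathbf{x}+\gamma A\mathbf{x})$, and since $U$ and $\Theta$ are both diagonal they commute, so this equals $\Theta\big({-d}\,\mathbf{x} + U S(\alpha\mathbf{x}+\gamma A\mathbf{x})\big) = \Theta \dot{\mathbf{x}} = \dot{\mathbf{y}}$, confirming that $\mathbf{y}=\Theta\mathbf{x}$ solves the dynamics on $\mathcal{G}^{\mathcal{W}}$. For the converse I would note that switching the set $\mathcal{W}$ on $\mathcal{G}^{\mathcal{W}}$ recovers $\mathcal{G}$ with the same matrix $\Theta$, so the statement is symmetric under $\mathbf{x} \leftrightarrow \Theta\mathbf{x}$ and no separate argument is needed. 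I do not anticipate a genuine obstacle: the only nonroutine ingredient is recognizing that oddness of $S$ is exactly what allows $\Theta$ to pass through the nonlinearity, and once that is identified the proof reduces to a short chain of substitutions.
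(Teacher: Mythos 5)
Your proposal is correct and follows essentially the same route as the paper: multiply the vector-form dynamics \eqref{eq:op-dyn-matrix} by $\Theta$, use $\Theta^2 = \mathcal{I}_N$, the commutation $\Theta U = U\Theta$, and the identity $S(\Theta\mathbf{z}) = \Theta S(\mathbf{z})$ coming from oddness of $S$. One small correction: oddness of $S$ is an explicit standing assumption of the model (``the odd saturating function $S$''), not a consequence of $S(0)=0$ and $\sign(S''(x))=-\sign(x)$ as your parenthetical claims --- those two conditions alone do not force $S(-x)=-S(x)$ --- but this does not affect the validity of the argument.
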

\begin{proof}
Suppose $\mathbf{x}(t)$ is a solution of \eqref{eq:opinion_dynamics_hom} on $\mathcal{G}$. Multiplying both sides of \eqref{eq:op-dyn-matrix} by the switching matrix $\Theta$ yields 
\begin{multline*}
    \frac{d}{dt} (\Theta \mathbf{x}(t)) = \Theta\left(- d \, \mathbf{x}(t) + U S(\alpha \mathbf{x}(t) + \gamma A \mathbf{x}(t)) \right) \\
    = -d \, \Theta \mathbf{x}(t) + \Theta U S(\alpha \mathbf{x}(t) + \gamma A \mathbf{x}(t)) \\
    =   -d \, \Theta \mathbf{x}(t) + U S(\alpha \Theta \mathbf{x}(t) + \gamma A^{\mathcal{W}} \Theta \mathbf{x}(t)),
\end{multline*}
where the last step follows since $\Theta U = U \Theta$ and $-S(y) = S(-y)$.
This shows that $\Theta \mathbf{x}(t)$ is a solution of \eqref{eq:opinion_dynamics_hom} on $\mathcal{G}^{\mathcal{W}}$. The other direction follows by an identical proof. 
\end{proof}

\begin{corollary}[Switching a graph ``rotates" a pitchfork bifurcation] \label{cor:pitchfork_switch}
Consider \eqref{eq:opinion_dynamics_hom} with $u_i = u \geq 0$ for all $i = 1, \dots, N$ on the graphs described in Theorem \ref{thm:switch_eq}. Suppose $\mathcal{G}$ satisfies the conditions of Proposition \ref{prop:pitchfork}. Then $\mathcal{G}^{\mathcal{W}}$ also satisfies the conditions of Proposition \ref{prop:pitchfork}. Furthermore, $\mathbf{x}^*$ is an equilibrium on the bifurcation diagram on $\mathcal{G}$ at some $u$ if and only if $\Theta \mathbf{x}^*$ is an equilibrium on the bifurcation diagram of $\mathcal{G}^{\mathcal{W}}$ at the same $u$. 
\end{corollary}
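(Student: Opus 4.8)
The plan is to prove the corollary by combining the eigenstructure relationship from Proposition~\ref{prop:switch_spectrum} with the trajectory correspondence from Theorem~\ref{thm:switch_eq}. The statement has two parts: first, that $\mathcal{G}^{\mathcal{W}}$ inherits the hypotheses of Proposition~\ref{prop:pitchfork} from $\mathcal{G}$; and second, that the bifurcation equilibria correspond under the map $\mathbf{x}^* \mapsto \Theta \mathbf{x}^*$. I would handle these in order, since the first part is what licenses applying Proposition~\ref{prop:pitchfork} to $\mathcal{G}^{\mathcal{W}}$ at all.

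For the first part, I would verify each hypothesis of Proposition~\ref{prop:pitchfork} in turn. The existence of a simple, real largest eigenvalue $\lambda^*$ transfers immediately because Proposition~\ref{prop:switch_spectrum}(1) says $A$ and $A^{\mathcal{W}}$ are isospectral, so they share the same $\lambda^*$ with the same algebraic multiplicity. The condition $\alpha + \gamma \lambda^* > 0$ then holds verbatim since it depends only on $\lambda^*$, which is unchanged. The remaining condition $\langle \mathbf{w}^*, (\mathbf{v}^*)^3 \rangle > 0$ is the one requiring a short computation: if $\mathbf{v}^*, \mathbf{w}^*$ are the eigenvectors for $\mathcal{G}$, then by Proposition~\ref{prop:switch_spectrum}(2) the corresponding eigenvectors for $\mathcal{G}^{\mathcal{W}}$ are $\Theta \mathbf{v}^*$ and $\Theta \mathbf{w}^*$. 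I would show $\langle \Theta \mathbf{w}^*, (\Theta \mathbf{v}^*)^3 \rangle = \langle \mathbf{w}^*, (\mathbf{v}^*)^3 \rangle$ by noting that each entry of the inner product picks up a factor $\theta(i)\cdot\theta(i)^3 = \theta(i)^4 = 1$, so the sign-definiteness condition is preserved exactly. This also confirms the normalization $\langle \Theta \mathbf{w}^*, \Theta \mathbf{v}^* \rangle = \langle \mathbf{w}^*, \mathbf{v}^* \rangle = 1$ is inherited.

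For the second part, I would invoke Theorem~\ref{thm:switch_eq} directly. An equilibrium $\mathbf{x}^*$ of \eqref{eq:opinion_dynamics_hom} on $\mathcal{G}$ at a given $u$ is a constant solution, and the theorem says $\Theta \mathbf{x}^*(t)$ is then a solution on $\mathcal{G}^{\mathcal{W}}$; since $\Theta$ is constant, $\Theta \mathbf{x}^*$ is a constant solution, hence an equilibrium of $\mathcal{G}^{\mathcal{W}}$ at the same $u$. The ``only if'' direction follows from the reverse implication of the theorem together with $\Theta^{-1} = \Theta$. To see that these correspond as points on the bifurcation \emph{diagram} (and not merely as equilibria at fixed $u$), I would note that the correspondence $\mathbf{x}^* \leftrightarrow \Theta \mathbf{x}^*$ holds simultaneously at every $u$ and is a linear diffeomorphism, so it maps the entire equilibrium branch structure of $\mathcal{G}$ onto that of $\mathcal{G}^{\mathcal{W}}$, preserving $u^*$ and the tangency to $\operatorname{span}(\mathbf{v}^*) \mapsto \operatorname{span}(\Theta \mathbf{v}^*)$.

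The main obstacle, though minor, is being careful that stability and the bifurcation structure—not just the set of equilibria—transfer correctly. Since $\Theta$ is a fixed linear isomorphism with $\Theta = \Theta^{-1}$, conjugating the vector field by $\Theta$ is a smooth change of coordinates, so it preserves local exponential stability (the Jacobian at $\Theta\mathbf{x}^*$ on $\mathcal{G}^{\mathcal{W}}$ is similar to the Jacobian at $\mathbf{x}^*$ on $\mathcal{G}$) and maps the pitchfork unfolding of $\mathcal{G}$ to that of $\mathcal{G}^{\mathcal{W}}$. I would state this observation explicitly rather than belabor the Jacobian computation, which is routine given the diffeomorphism established in Theorem~\ref{thm:switch_eq}.
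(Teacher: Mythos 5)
Your proposal is correct and follows essentially the same route as the paper: isospectrality handles the eigenvalue hypotheses, the $\theta(i)^4=1$ computation shows $\langle \Theta\mathbf{w}^*,(\Theta\mathbf{v}^*)^3\rangle = \langle \mathbf{w}^*,(\mathbf{v}^*)^3\rangle$, and Theorem~\ref{thm:switch_eq} gives the equilibrium correspondence. Your additional remarks on the preserved normalization and on stability transferring under the linear change of coordinates are correct refinements that the paper leaves implicit.
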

\edits{
\begin{proof}
For \eqref{eq:opinion_dynamics_hom} on $\mathcal{G}^{\mathcal{W}}$, $\lambda^*$ is simple and $\alpha + \gamma \lambda^* > 0$ because $\mathcal{G}$, $\mathcal{G}^{\mathcal{W}}$ are isospectral. Additionally, $\langle \Theta \mathbf{w}^*, (\Theta \mathbf{v}^*)^3 \rangle = \sum_{i = 1}^N \theta(i)^4 w_i^* (v_i^*)^3 = \sum_{i=1}^Nw_i^* (v_i^*)^3 = \langle \mathbf{w}^*,\mathbf{v}^* \rangle > 0$ since $\theta(i) = \pm1$ for all $i \in \mathcal{V}$ and therefore the condition of Proposition \ref{prop:pitchfork} are satisfied. The rest of the corollary statement follows as a direct consequence of Theorem \ref{thm:switch_eq}. 
\end{proof}
}

\noindent We illustrate the intuition of Corollary \ref{cor:pitchfork_switch} in Fig. \ref{fig:siwtch_graphic}. 

\begin{theorem}[Switching complementary vertex sets generates the same flow]\label{thm:siwtch_complement}
Consider two switching equivalent graphs $\mathcal{G}^\mathcal{W}$, $\mathcal{G}^{\mathcal{V} \setminus \mathcal{W}}$, generated by switching a set of vertices $\mathcal{W}$ or its complement $\mathcal{V}\setminus \mathcal{W}$ on graph $\mathcal{G}$. Let the switching matrices in relation to $\mathcal{G}$ of these two graphs be $\Theta^{\mathcal{W}}$ and $\Theta^{\mathcal{V}\setminus\mathcal{W}}$ respectively. The trajectory $\mathbf{x}(t)$ is a solution of \eqref{eq:opinion_dynamics_hom} on $\mathcal{G}^{\mathcal{W}}$ if and only if it is also a solution of \eqref{eq:opinion_dynamics_hom} on $\mathcal{G}^{\mathcal{V} \setminus \mathcal{W}}$. 
\end{theorem}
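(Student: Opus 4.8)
The plan is to show that the two switched graphs are in fact the \emph{same} graph, after which the claim becomes immediate. The key observation concerns the relationship between the two switching functions. Writing $\theta^{\mathcal{W}}$ and $\theta^{\mathcal{V}\setminus\mathcal{W}}$ for the switching functions associated with $\mathcal{G}^{\mathcal{W}}$ and $\mathcal{G}^{\mathcal{V}\setminus\mathcal{W}}$ (both taken relative to the common base graph $\mathcal{G}$), the convention that $\theta(i) = -1$ for $i$ in the switched set and $\theta(i)=+1$ otherwise gives $\theta^{\mathcal{W}}(i) = -\theta^{\mathcal{V}\setminus\mathcal{W}}(i)$ for every $i \in \mathcal{V}$, since a vertex lies in $\mathcal{W}$ precisely when it does not lie in $\mathcal{V}\setminus\mathcal{W}$. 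At the level of switching matrices this reads $\Theta^{\mathcal{W}} = -\Theta^{\mathcal{V}\setminus\mathcal{W}}$.

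Next I would substitute this relation into the switching transformation \eqref{eq:switching_transf}. Because $\Theta$ enters the transformation quadratically (recall $\Theta^{-1}=\Theta$), the overall sign cancels:
\begin{equation*}
  A^{\mathcal{W}} = \Theta^{\mathcal{W}} A \Theta^{\mathcal{W}} = (-\Theta^{\mathcal{V}\setminus\mathcal{W}}) A (-\Theta^{\mathcal{V}\setminus\mathcal{W}}) = \Theta^{\mathcal{V}\setminus\mathcal{W}} A \Theta^{\mathcal{V}\setminus\mathcal{W}} = A^{\mathcal{V}\setminus\mathcal{W}}.
\end{equation*}
Equivalently, one can verify the same fact directly from the edge-level rule \eqref{eq:switch}: since $\sigma_{\mathcal{W}}(e_{ik}) = \theta^{\mathcal{W}}(i)\sigma(e_{ik})\theta^{\mathcal{W}}(k)$, the product $\theta^{\mathcal{W}}(i)\theta^{\mathcal{W}}(k)$ is unchanged when each factor's sign is flipped, so $\sigma_{\mathcal{W}} = \sigma_{\mathcal{V}\setminus\mathcal{W}}$. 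Thus $A^{\mathcal{W}} = A^{\mathcal{V}\setminus\mathcal{W}}$, i.e. $\mathcal{G}^{\mathcal{W}}$ and $\mathcal{G}^{\mathcal{V}\setminus\mathcal{W}}$ have identical signed adjacency matrices.

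Finally I would conclude by noting that the opinion dynamics \eqref{eq:opinion_dynamics_hom} depend on the communication graph only through its adjacency matrix $A$. Since $A^{\mathcal{W}} = A^{\mathcal{V}\setminus\mathcal{W}}$, the two systems are literally the same differential equation, so any trajectory $\mathbf{x}(t)$ that solves \eqref{eq:opinion_dynamics_hom} on $\mathcal{G}^{\mathcal{W}}$ also solves it on $\mathcal{G}^{\mathcal{V}\setminus\mathcal{W}}$, and conversely. There is essentially no obstacle here: the entire content of the theorem is the sign cancellation of the preceding step, which reflects the familiar fact from signed graph theory that switching a vertex set and switching its complement produce the same signed graph. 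The only point requiring care is the bookkeeping of the two switching functions with respect to the common base graph $\mathcal{G}$, so that the relation $\Theta^{\mathcal{W}} = -\Theta^{\mathcal{V}\setminus\mathcal{W}}$ is applied correctly.
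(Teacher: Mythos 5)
Your proof is correct, but it takes a genuinely different and more elementary route than the paper. You observe that $\theta^{\mathcal{W}}(i) = -\theta^{\mathcal{V}\setminus\mathcal{W}}(i)$ for every vertex, so the global sign cancels in the quadratic switching rule $\sigma_{\mathcal{W}}(e_{ik}) = \theta(i)\sigma(e_{ik})\theta(k)$, and hence $A^{\mathcal{W}} = A^{\mathcal{V}\setminus\mathcal{W}}$: the two switched graphs are literally \emph{identical}, so the two instances of \eqref{eq:opinion_dynamics_hom} are the same differential equation and the equivalence of solutions is immediate. The paper instead argues at the level of the dynamics: it applies Theorem \ref{thm:switch_eq} twice to transport a solution from $\mathcal{G}^{\mathcal{W}}$ back to $\mathcal{G}$ and then forward to $\mathcal{G}^{\mathcal{V}\setminus\mathcal{W}}$, uses $\Theta^{\mathcal{V}\setminus\mathcal{W}}\Theta^{\mathcal{W}} = -\mathcal{I}_N$ to conclude that $-\mathbf{x}(t)$ solves the dynamics on $\mathcal{G}^{\mathcal{V}\setminus\mathcal{W}}$, and then invokes the odd symmetry of the vector field (oddness of $S$) to recover $\mathbf{x}(t)$ itself. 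Your argument is strictly stronger and more transparent --- it shows the conclusion holds for \emph{any} dynamics that depend on the graph only through its signed adjacency matrix, with no appeal to Theorem \ref{thm:switch_eq} or to oddness of $S$ --- and it makes visible the underlying signed-graph fact that a set and its complement induce the same switching. What the paper's route buys is consistency of exposition: it reuses the coordinate-change machinery of Theorem \ref{thm:switch_eq} and highlights the role of the $\mathbf{x}\mapsto-\mathbf{x}$ symmetry that also underlies the pitchfork pair $\mathbf{x}_2^* = -\mathbf{x}_1^*$, but as a matter of logic your shortcut is sound and complete.
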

\begin{proof}
Suppose $\mathbf{x}(t)$ is a solution of \eqref{eq:opinion_dynamics_hom} on $\mathcal{G}^{\mathcal{W}}$. Then by Theorem \ref{thm:switch_eq}, $\Theta^{\mathcal{W}} \mathbf{x}(t)$ is an equilibrium of \eqref{eq:opinion_dynamics_hom} on $\mathcal{G}$. \edits{Applying the complementary switching transformation, and observing that $\Theta^{\mathcal{V}\setminus\mathcal{W}} \Theta^{\mathcal{W}} = -\mathcal{I}_N$}, we see that $\Theta^{\mathcal{V}\setminus\mathcal{W}} \Theta^{\mathcal{W}} \mathbf{x}(t) = - \mathbf{x}(t)$ is a solution of \eqref{eq:opinion_dynamics_hom} on  $\mathcal{G}^{\mathcal{V} \setminus \mathcal{W}}$. By odd symmetry of the dynamic equations \eqref{eq:opinion_dynamics_hom}, $\mathbf{x}(t)$ is also a solution of \eqref{eq:opinion_dynamics_hom} on $\mathcal{G}^{\mathcal{V} \setminus \mathcal{W}}$. The proof {of the converse} follows the same steps in opposite order. 
\end{proof}

\begin{figure}
    \centering
    \includegraphics[width=\linewidth]{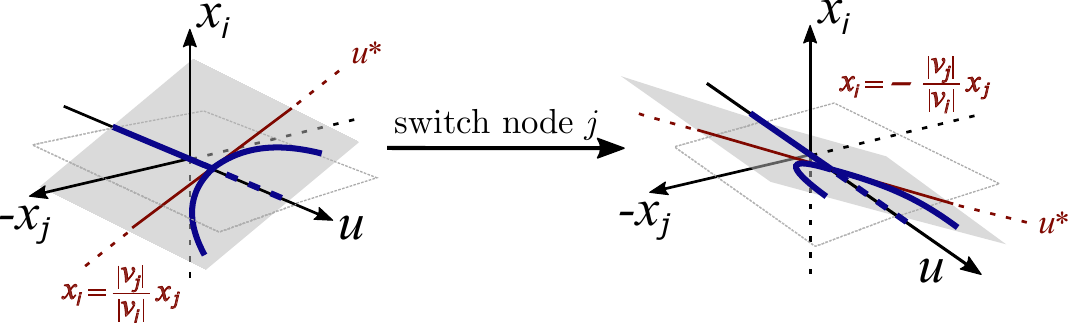}
    \caption{Illustration of Corollary \ref{cor:pitchfork_switch}. The bifurcation diagram of the switched system is a ``rotated" version of the original diagram because the sign of $v_j$ flips.}
    \label{fig:siwtch_graphic}
\end{figure}

\subsection{Synthesis of nonlinear opinion patterns}

The theoretical results of Section \ref{sec:switch_patterns}, 
lead to a design procedure to build a signed adjacency matrix that ensures a desired allocation of agents across the two options. \textit{\textbf{Step 1.} Start with a strongly connected $\mathcal{G}$ with an all-positive signature\edits{, i.e. $a_{ik} \in \{0,1\}$ for all $i,k \in \mathcal{V}$}}. By Corollary \ref{cor:agreement}, \eqref{eq:opinion_dynamics_hom} on $\mathcal{G}$ \edits{has an all-positive stable equilibrium $\mathbf{x}_1^*$ and all-negative stable equilibrium $\mathbf{x}_2^*$}. \textit{\textbf{Step 2.} \edits{
Define the switching set $\mathcal W$.}}
\edits{In this step, the designer chooses which nodes are grouped together. The two partitions correspond to the two tasks. \textit{\textbf{Step 3.} Update edge signatures of $\mathcal{G}$ locally as $a_{ik}^{\mathcal{W}} = \theta(i) a_{ik} \theta(k)$.  }} 
\edits{This edge signature update generates the switch-equivalent graph $G^\mathcal{W}$ and groups} all nodes in $\mathcal{W}$ and all nodes in $\mathcal{V}\setminus\mathcal{W}$ together by sign, i.e. the dynamics \eqref{eq:opinion_dynamics_hom} on $\mathcal{G}^\mathcal{W}$ \edits{is} bistable \edits{with stable} equilibria $\Theta \mathbf{x}_1^*$, $\Theta \mathbf{x}_2^*$.
If $|\mathcal{W}| = M$, the equilibrium $\Theta \mathbf{x}_1^*$ \edits{corresponds to} $M$ negative nodes, and  $\Theta \mathbf{x}_2^*$ \edits{to} $N-M$ negative nodes. \edits{We illustrate this in Fig. \ref{fig:applications_preassigned}.  Step 3 can also be implemented in a decentralized manner since it only relies on the pairwise switching states of neighboring agents}.


\begin{figure}
    \centering
    \includegraphics[width=1\linewidth]{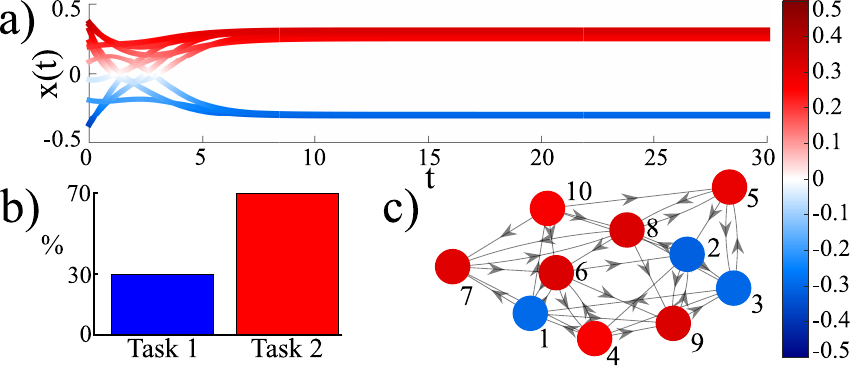}
    \caption{Assigning \edits{10 agents} to a 30-70\% distribution by switching agents 1, 2 and 3. (a) Time trajectory of the opinion dynamics. (b) Final agent distribution. (c) Network diagram with the opinion of each agent at $t$ = 30. Parameters: $S$ = tanh, $d$ = 1, $\alpha$ = 1.2, $\gamma$ = 1.3, $u$ = 0.324. Arrows on the graph defined by the sensing convention.}
    \label{fig:applications_preassigned}
\end{figure}


\section{Dynamic Switching \label{sec:dyn_switch}}

We next investigate the asymptotic opinion dynamics of  \eqref{eq:opinion_dynamics_hom} when the underlying communication graph $\mathcal{G}$ instantaneously changes to a switching equivalent graph $\mathcal{G}^{\mathcal{W}}$.

\subsection{Monotonicity and structural balance} 

First, we introduce some relevant definitions from the study of monotone systems. Let $\mathcal{K}$ be an orthant of $\mathds{R}^N$, $\mathcal{K} = \{\mathbf{x} \in \mathds{R}^N \ s.t. \ (-1)^{m_i} x_i \geq 0, \ i = 1, \dots, N \}$ with each $m_i \in \{0,1\}$. The orthant $\mathcal{K}$ generates a partial ordering ``$\leq_{\mathcal{K}}$'' on $\mathds{R}^N$ where if $\mathbf{x},\mathbf{y} \in \mathds{R}^N$, $\mathbf{y} \leq_{\mathcal{K}} \mathbf{x}$ if and only if $\mathbf{x} - \mathbf{y} \in \mathcal{K}$. We say a system $\dot{\mathbf{x}} = f(\mathbf{x})$ on $\mathcal{U} \subseteq \mathds{R}^N$ is \textit{type $\mathcal{K}$ monotone} if its flow preserves the partial ordering $\leq_{\mathcal{K}}$, i.e. if $\mathbf{x}_1(0) \leq_{\mathcal{K}} \mathbf{x}_2(0)$ implies $\mathbf{x}_1(t) \leq_{\mathcal{K}} \mathbf{x}_2(t)$ for all $t > 0$. 

\begin{lemma} \label{lem:balance}
Consider \eqref{eq:opinion_dynamics_hom} on a signed graph $\mathcal{G}$. It is a type $\mathcal{K}$ monotone system if and only if $\mathcal{G}$ is switching equivalent to $\mathcal{G}^+$, for which $\sigma(e_{ik}) = 1$ for all $e_{ik} \in \mathcal{E}$, i.e. $\mathcal{G}$ is \textbf{structurally balanced}.
\end{lemma}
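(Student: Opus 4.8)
The plan is to reduce monotonicity to an infinitesimal (Jacobian) sign condition via the Kamke--M\"uller criterion, and then to recognize that this sign condition is exactly the algebraic characterization of structural balance through the switching matrix $\Theta$. Throughout, the relevant domain is all of $\mathds{R}^N$, which is convex, so boundedness (Lemma \ref{lem:boundedness}) is not needed and the standard infinitesimal characterization of type $\mathcal{K}$ monotone systems applies: for an orthant $\mathcal{K}$ with sign pattern $(m_1,\dots,m_N)$, the flow of $\dot{\mathbf{x}} = f(\mathbf{x})$ preserves $\leq_{\mathcal{K}}$ if and only if $(-1)^{m_i + m_j}\, \partial f_i/\partial x_j(\mathbf{x}) \geq 0$ for all $i \neq j$ and all $\mathbf{x} \in \mathds{R}^N$. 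The lemma's ``type $\mathcal{K}$ monotone'' should be read as monotone with respect to \emph{some} orthant $\mathcal{K}$, so the goal is to show existence of a suitable sign pattern is equivalent to structural balance.

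First I would compute the off-diagonal Jacobian entries of \eqref{eq:opinion_dynamics_hom}. Writing $y_i = \alpha x_i + \gamma \sum_{k \neq i} a_{ik} x_k$, for $i \neq j$ one obtains $\partial f_i/\partial x_j = u_i \gamma\, S'(y_i)\, a_{ij}$. Since $u_i > 0$, $\gamma > 0$, and $S$ is a strictly increasing saturation function so that $S'(x) > 0$ for all $x$, the sign of $\partial f_i/\partial x_j$ equals the sign of $a_{ij}$, uniformly in $\mathbf{x}$, and it vanishes exactly when $e_{ij} \notin \mathcal{E}$. This uniformity is what makes the equivalence clean: the Jacobian sign pattern is fixed on edges and depends only on the signature of $\mathcal{G}$, with no $\mathbf{x}$-dependence to control.

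Next I would translate the Kamke condition into graph language. Setting $\theta(i) = (-1)^{m_i} \in \{1,-1\}$, the condition $(-1)^{m_i+m_j} a_{ij} \geq 0$ becomes $\theta(i)\theta(j)\, a_{ij} \geq 0$ for every edge $e_{ij} \in \mathcal{E}$; since $a_{ij} \in \{1,-1\}$ on edges, this is exactly $a_{ij} = \theta(i)\theta(j)$. With $\Theta = \operatorname{diag}(\theta(1),\dots,\theta(N))$, this says that $(\Theta A \Theta)_{ij} = \theta(i) a_{ij} \theta(j) \geq 0$ for all $i,j$, i.e. the switched adjacency matrix from \eqref{eq:switching_transf} associated with $\mathcal{W} = \{i : \theta(i) = -1\}$ has an all-positive signature. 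Hence a sign pattern $(m_i)$ rendering the system type $\mathcal{K}$ monotone exists if and only if a switching matrix $\Theta$ carrying $A$ to an all-nonnegative matrix exists, which is precisely the statement that $\mathcal{G}$ is switching equivalent to $\mathcal{G}^+$, i.e. structurally balanced. Both implications then follow from the bidirectional form of the Kamke criterion.

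The main obstacle I anticipate is invoking the Kamke--M\"uller characterization in its full two-way strength, rather than the computation itself. The ``if'' direction needs only sufficiency (the sign condition implies monotonicity on the convex domain $\mathds{R}^N$), but the ``only if'' direction relies on the converse: preservation of $\leq_{\mathcal{K}}$ forces the Jacobian signs, which, because $S'>0$ everywhere makes each $\partial f_i/\partial x_j$ sign-definite on edges, pins down the parities $m_i + m_j$ and hence $\theta$. I would also record the harmless degenerate cases to close the equivalence: non-edges impose no constraint (consistent with $a_{ij}=0$), and if some pair has $a_{ij} a_{ji} = -1$ then no consistent $\theta$ exists, while correspondingly the opposing signs of $\partial f_i/\partial x_j$ and $\partial f_j/\partial x_i$ preclude monotonicity for every orthant, so both sides of the equivalence fail together.
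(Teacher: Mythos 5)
Your proposal is correct and matches the paper's argument: the paper likewise computes the off-diagonal Jacobian $u\gamma\operatorname{diag}(S'((\alpha\mathcal{I}_N+\gamma A)\mathbf{x}))A$, notes $S'>0$ so the sign pattern is that of $A$, and invokes the Kamke--M\"uller orthant criterion (cited as \cite[Lemma 2.1]{smith1988systems}) to identify type $\mathcal{K}$ monotonicity with the existence of a switching matrix $\Theta$ making $\Theta A\Theta$ nonnegative. Your write-up is merely more explicit about the ``only if'' direction and the degenerate cases, both of which the paper leaves implicit in the bidirectional form of that lemma.
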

\begin{proof}
The off-diagonal terms of the Jacobian matrix $J(\mathbf{x})$ are $u \gamma \operatorname{diag}(S'((\alpha \mathcal{I}_N + \gamma A) \mathbf{x})) A$. Let $\Theta$ be the switching matrix between $\mathcal{G}$ and $\mathcal{G}^+$. Since $S'(y) > 0$ for all $y \in \mathds{R}$, the matrix $u \gamma  \Theta \operatorname{diag}(S'((\alpha \mathcal{I}_N + \gamma A) \mathbf{x})) A \Theta$ has nonnegative components, and the lemma follows by \cite[Lemma 2.1]{smith1988systems}.
\end{proof}

\subsection{Instantaneous switching}


Suppose $\mathbf{x}^*$ is a hyperbolic equilibrium of \eqref{eq:opinion_dynamics_hom}, i.e. the linearization of the system at $\mathbf{x}^*$ has $m$ unstable eigenvalues and $N-m$ stable eigenvalues. Then by \cite[Theorem 1.3.2]{guckenheimer2013nonlinear}, there exist smooth local unstable and stable manifolds $W^u_{loc}(\mathbf{x}^*)$, $W^s_{loc}(\mathbf{x}^*)$ of dimensions $m$, $N-m$ that are tangent to the unstable and stable eigenspaces of the linearized systems at $\mathbf{x}^*$ and invariant under the dynamics. Global stable and unstable manifolds $W^s(\mathbf{x}^*)$,$W^u(\mathbf{x}^*)$ invariant under the dynamics can be obtained by continuing the trajectories in their local counterparts forwards or backwards in time.

\begin{assumption}[Stable manifold of origin is bounded; Fig.~\ref{fig:assumption_visual}]
Consider \eqref{eq:opinion_dynamics_hom} on some structurally balanced graph $\mathcal{G}$ with $u_i = u > u^*$ and $u<u_2$ when appropriate, as defined in Corollary \ref{cor:uniqueness}. Let \edits{$\mathcal{U}' \subset \mathds{R}^N$} be an open neighborhood containing the origin, and let $\mathbf{x} \in W^{s}(\mathbf{0}) \edits{\cap \mathcal{U}'}$. 1) $|\langle\mathbf{w}^*,\mathbf{x}\rangle| <  \varepsilon \| \mathbf{x} \|^2$ for some $0 < \varepsilon < 1$; 2) for equilibria $\mathbf{x}^*_k \neq \mathbf{0}$ of Proposition \ref{prop:pitchfork} with $k \in \{1,2\}$, $|\langle\mathbf{w}^*,\mathbf{x}^*_k\rangle| >  \varepsilon \| \mathbf{x}^*_k \|^2$. \label{asm:st_manifold}
\end{assumption}

\begin{figure}
    \centering
    \includegraphics[width=0.5\linewidth]{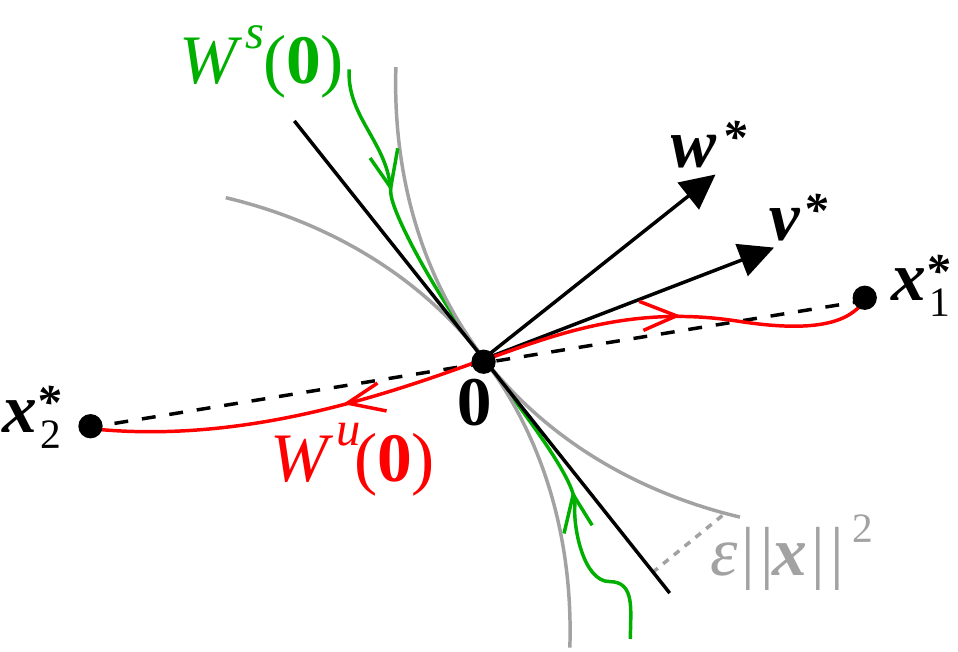}
    \caption{Geometric intuition behind Assumption \ref{asm:st_manifold}. The one-dimensional unstable manifold $W^u(\mathbf{0})$ of the origin (shown in red) forms heteroclinic orbits with the stable equilibria $\mathbf{x}_1^*,\mathbf{x}_2^*$, as is generically the case for monotone systems - see \cite[Theorem 2.8]{smith1988systems}.}
    \label{fig:assumption_visual}
\end{figure}

Estimating the $\varepsilon$ bound described above requires a lengthy computation of the stable manifold approximation \edits{(see \cite[p.132]{guckenheimer2013nonlinear} for an example of an invariant manifold approximation)} which we do not carry out for space considerations. However, this assumption should hold at least locally as a consequence of the (Un)Stable Manifold Theorem \cite[Theorem 1.3.2]{guckenheimer2013nonlinear} and monotonicity of the flow. We verified the assumption numerically for several graphs. \edits{For example, numerically we find $\varepsilon=0.05$ to be a valid bound for the graph and parameter values in Fig. \ref{fig:feature3} with $\mathbf{w}^*$ normalized to unit norm; in general $\varepsilon$ will vary with $u,d,\alpha,\gamma$}. 
\begin{lemma}[Regions of attraction]\label{lem:RoA}
Consider \eqref{eq:opinion_dynamics_hom} on some structurally balanced graph $\mathcal{G}$ with $u_i = u > u^*$ for all $i = 1, \dots, N$, on an open and bounded neighborhood $\Omega_r$ as defined in Lemma \ref{lem:boundedness}. Let $\mathbf{x}_1^*$, $\mathbf{x}_2^*$ be the nonzero equilibria described in Proposition \ref{prop:pitchfork} with $\langle \mathbf{w}^*, \mathbf{x}_1^* \rangle > 0$. Consider an initial condition $\mathbf{x}(0)$ at $t = 0$. 
If $\langle \mathbf{w}^*, \mathbf{x}(0) \rangle > \varepsilon \| \mathbf{x}(0) \|^2 (< -\varepsilon \| \mathbf{x}(0) \|^2)$ then as $t \to \infty$, $\mathbf{x}(t) \to \mathbf{x}_1^* (\mathbf{x}_2^*)$ .  
\end{lemma}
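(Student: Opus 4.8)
The plan is to combine the monotone structure of the flow with the local, $\mathbf{w}^*$-based description of the origin's stable manifold supplied by Assumption \ref{asm:st_manifold}. Throughout I work with the partial order $\leq_{\mathcal{K}}$ of Lemma \ref{lem:balance}; equivalently, one may pass to the switched coordinates $\mathbf{z} = \Theta \mathbf{x}$ in which \eqref{eq:opinion_dynamics_hom} is cooperative and irreducible, $\mathbf{w}^*$ and $\mathbf{v}^*$ become the positive Perron eigenvectors, and $\mathbf{x}_1^*,\mathbf{x}_2^*$ become the ordered equilibria $\mathbf{z}_2^* \prec \mathbf{0} \prec \mathbf{z}_1^*$. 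Since $\Theta$ is orthogonal, both $\langle \mathbf{w}^*, \mathbf{x}\rangle$ and $\|\mathbf{x}\|$ are unchanged, so the hypothesis and conclusion are invariant under this reduction.

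First I would establish convergence to an equilibrium. By Lemma \ref{lem:boundedness} every trajectory starting in $\Omega_r$ stays in the compact set $\Omega_r$, so its $\omega$-limit set is nonempty. By Lemma \ref{lem:balance} the flow is strongly monotone, and under the standing hypotheses the only equilibria are the hyperbolic points $\mathbf{0},\mathbf{x}_1^*,\mathbf{x}_2^*$, with $\mathbf{x}_1^*,\mathbf{x}_2^*$ asymptotically stable and $\mathbf{0}$ a saddle carrying a one-dimensional unstable manifold (Proposition \ref{prop:pitchfork}). By the generic convergence theory for strongly monotone systems and the heteroclinic structure of \cite[Thm.~2.8]{smith1988systems} ($W^u(\mathbf{0})$ limits onto $\mathbf{x}_1^*$ and $\mathbf{x}_2^*$), every trajectory converges to one of the three equilibria, and the codimension-one stable manifold $W^s(\mathbf{0})$ is the separatrix splitting $\Omega_r$ into the two open basins of $\mathbf{x}_1^*$ and $\mathbf{x}_2^*$. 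It then suffices to show that $\mathbf{x}(0)$ with $\langle\mathbf{w}^*,\mathbf{x}(0)\rangle > \varepsilon\|\mathbf{x}(0)\|^2$ lies in the basin of $\mathbf{x}_1^*$; the reverse inequality is symmetric via $\mathbf{x}_2^* = -\mathbf{x}_1^*$ and oddness of \eqref{eq:opinion_dynamics_hom}.

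Next I would locate the separatrix relative to the sign of $\langle\mathbf{w}^*,\cdot\rangle$. The Jacobian at the origin is $J(\mathbf{0}) = (-d + u\alpha)\mathcal{I}_N + u\gamma A$, whose unstable eigenvalue corresponds to $\lambda^*$ with left eigenvector $\mathbf{w}^*$; hence the stable eigenspace is exactly the hyperplane $(\mathbf{w}^*)^\perp = \{\langle\mathbf{w}^*,\mathbf{x}\rangle = 0\}$, to which $W^s(\mathbf{0})$ is tangent at $\mathbf{0}$. Assumption \ref{asm:st_manifold}(1) upgrades this tangency to the quantitative statement that $W^s(\mathbf{0})$ stays inside the quadratic cone $R_0 = \{|\langle\mathbf{w}^*,\mathbf{x}\rangle| \le \varepsilon\|\mathbf{x}\|^2\}$ (at least within $\mathcal{U}'$), while Assumption \ref{asm:st_manifold}(2) places $\mathbf{x}_1^*$ in the cap $R_+ = \{\langle\mathbf{w}^*,\mathbf{x}\rangle > \varepsilon\|\mathbf{x}\|^2\}$ and $\mathbf{x}_2^*$ in $R_- = \{\langle\mathbf{w}^*,\mathbf{x}\rangle < -\varepsilon\|\mathbf{x}\|^2\}$. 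Completing the square shows $R_+$ is the open ball $\{\|\mathbf{x} - \mathbf{w}^*/(2\varepsilon)\|^2 < \|\mathbf{w}^*\|^2/(4\varepsilon^2)\}$, hence connected and disjoint from $R_0$. Granting that $R_+$ remains disjoint from the separatrix globally, connectedness of $R_+$ together with $\mathbf{x}_1^* \in R_+$ places all of $R_+$ in the basin of $\mathbf{x}_1^*$, giving $\mathbf{x}(t)\to\mathbf{x}_1^*$; the case $R_-$ is identical.

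The main obstacle is precisely this globalization: Assumption \ref{asm:st_manifold}(1) pins $W^s(\mathbf{0})$ inside $R_0$ only in the neighborhood $\mathcal{U}'$, whereas $R_+$ is a full ball through $\mathbf{0}$, so a priori the separatrix could bend back into $R_+$ away from the origin and break the basin argument. Closing this gap is where strong monotonicity must enter: the stable manifold of a saddle in a strongly monotone flow is an unordered invariant set (no two of its points are related by $\leq_{\mathcal{K}}$), which rigidly constrains its global shape and, with the tangency at $\mathbf{0}$ and the placement of $\mathbf{x}_1^*,\mathbf{x}_2^*$ in $R_\pm$, forces $W^s(\mathbf{0})$ to remain a graph over $(\mathbf{w}^*)^\perp$ separating $R_+$ from $R_-$. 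I expect the cleanest rigorous route is to argue directly on the monotone trajectory: any trajectory converging to $\mathbf{0}$ or to $\mathbf{x}_2^*$ must eventually satisfy $\langle\mathbf{w}^*,\mathbf{x}(t)\rangle \le \varepsilon\|\mathbf{x}(t)\|^2$ (by Assumption \ref{asm:st_manifold}(1) near $\mathbf{0}$, respectively by $\langle\mathbf{w}^*,\mathbf{x}_2^*\rangle < 0$), so ruling these out reduces to showing that the cap condition $\langle\mathbf{w}^*,\mathbf{x}(t)\rangle > \varepsilon\|\mathbf{x}(t)\|^2$ is preserved along the flow — a Nagumo-type invariance check on $g(\mathbf{x}) = \langle\mathbf{w}^*,\mathbf{x}\rangle - \varepsilon\|\mathbf{x}\|^2$ whose quadratic boundary terms are exactly what Assumption \ref{asm:st_manifold} is calibrated to control.
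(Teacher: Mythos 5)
Your argument follows the paper's proof essentially step for step: monotonicity plus \cite[Theorem 2.6]{smith1988systems} makes the union of the two basins open and dense in $\Omega_r$ with $W^s(\mathbf{0})$ as the separatrix, Assumption \ref{asm:st_manifold} keeps the connected sets $U_{\pm}$ off the separatrix while placing $\mathbf{x}_1^*$ and $\mathbf{x}_2^*$ inside them, and connectedness forces $U_{+}\subset B(\mathbf{x}_1^*)$, $U_{-}\subset B(\mathbf{x}_2^*)$. The globalization gap you flag --- Assumption \ref{asm:st_manifold}(1) constrains $W^s(\mathbf{0})$ only on the neighborhood $\mathcal{U}'$ --- is genuine but is equally present in the paper's own proof, which simply asserts that $U_{\pm}$ do not intersect the stable manifold throughout $\Omega_r$ and are therefore contained in the respective basins.
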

\begin{proof}
We established in Corollary \ref{cor:uniqueness} that the only equilibria the system admits are $\mathbf{0},\mathbf{x}_1^*,\mathbf{x}_2^*$, and $\Omega_r$ is positively invariant by Lemma \ref{lem:boundedness}. Let $B(\mathbf{x}_i^*)$ be the basin of attraction of equilibrium $\mathbf{x}_i$ in $\Omega_r$. By monotonicity (Lemma \ref{lem:balance}) and \cite[Theorem 2.6]{smith1988systems}, the set $\operatorname{Int}(B(\mathbf{x}_1^*)) \cup \operatorname{Int}(B(\mathbf{x}_2^*))$ is open and dense in $\Omega_r$, where $\operatorname{Int}$ signifies the interior points. Then following Assumption \ref{asm:st_manifold}, the stable manifold partitions $\Omega_r$ into the basins of attraction of the two locally asymptotically stable equilibria. The sets $U_{+} = \{\mathbf{x} \in \Omega_r \ s.t. \langle\mathbf{w}^*,\mathbf{x}\rangle > \varepsilon \| \mathbf{x} \|^2\}$, $U_{-} = \{\mathbf{x} \in \Omega_r \ s.t. \langle\mathbf{w}^*,\mathbf{x}\rangle < -\varepsilon \| \mathbf{x} \|^2\}$ do not intersect the center manifold and are therefore positively invariant. Then since $\mathbf{x}_1 \in U_{+}$ and $\mathbf{x}_{2} \in U_{-}$, we get that $U_{+} \subset B(\mathbf{x}_1^*)$ and $U_{-} \subset B(\mathbf{x}_2^*)$.
\end{proof}
\begin{remark}
In practice, without a precise value for the bound $\varepsilon$ from Assumption \ref{asm:st_manifold}, for most points $\mathbf{x}(0) \in \mathds{R}^N$ it is sufficient to check whether the projection of $\mathbf{x}(0)$ onto $\mathbf{w}^*$ is positive or negative to determine which region of attraction the points belongs to, i.e.  $\langle \mathbf{w}^*,\mathbf{x}(0) \rangle > 0 (< 0)\label{eq:convergence_condition}$
where $>$ implies convergence to $\mathbf{x}_1^*$ and $<$ to $\mathbf{x}_2^*$. This is because the stable manifold that partitions the space of possible opinion configurations occurs near the plane of points normal to $\mathbf{w}^*$ at the origin; see Fig. \ref{fig:assumption_visual} for illustration. As long as $\mathbf{x}(0)$ is not too close to this plane, the projection is a reliable heuristic for the asymptotic dynamics of the network opinions.
\end{remark}

\begin{theorem} \label{thm:dynamic_switch}
Consider \eqref{eq:opinion_dynamics_hom} on some $\mathcal{G}$ and let $\mathbf{x}_1^*$, $\mathbf{x}_2^*$  be the nonzero equilibria described in Proposition \ref{prop:pitchfork}, with $\langle \mathbf{w}^*,\mathbf{x}_1^*\rangle > 0$. Let $\mathcal{G}^{\mathcal{W}}$ be switch equivalent to $\mathcal{G}$ with the associated switching matrix $\Theta$. Suppose at $t = 0$,
\edits{$\mathbf{x}(0)$ is close to $\mathbf{x}_i^*$ where $i =1$ or $2$.} 
If $|\langle \Theta \mathbf{w}^*, \mathbf{x}_i^* \rangle| > \varepsilon \| \mathbf{x}_i^*\|^2$ and $\langle \Theta \mathbf{w}^*, \mathbf{x}_i \rangle > 0 (< 0)$ then for \eqref{eq:opinion_dynamics_hom} on $\mathcal{G}^{\mathcal{W}}$ as $t \to \infty$, $\mathbf{x}(t) \to \Theta \mathbf{x}_i (\to - \Theta \mathbf{x}_i)$. 
\end{theorem}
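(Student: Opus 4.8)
The plan is to recognize this result as essentially Lemma \ref{lem:RoA} transported to the switched system on $\mathcal{G}^{\mathcal{W}}$. First I would assemble the spectral and equilibrium dictionary between the two graphs. By Corollary \ref{cor:pitchfork_switch}, the dynamics \eqref{eq:opinion_dynamics_hom} on $\mathcal{G}^{\mathcal{W}}$ again satisfy the hypotheses of Proposition \ref{prop:pitchfork}, and by Theorem \ref{thm:switch_eq} its nonzero stable equilibria are exactly $\Theta \mathbf{x}_1^*$ and $\Theta \mathbf{x}_2^* = -\Theta \mathbf{x}_1^*$; moreover, by Proposition \ref{prop:switch_spectrum} the leading left eigenvector of $A^{\mathcal{W}}$ is $\Theta \mathbf{w}^*$. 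Since $\mathcal{G}^{\mathcal{W}}$ is switching equivalent to the (assumed structurally balanced) $\mathcal{G}$, it is itself structurally balanced, so Lemma \ref{lem:RoA} applies to it.

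The second step is to identify which switched equilibrium plays the role of the ``positive-projection'' equilibrium in Lemma \ref{lem:RoA}. Using $\Theta^2 = \mathcal{I}_N$, I would compute $\langle \Theta \mathbf{w}^*, \Theta \mathbf{x}_j^* \rangle = \langle \mathbf{w}^*, \mathbf{x}_j^* \rangle$, so that $\Theta \mathbf{x}_1^*$ inherits positive projection and $\Theta \mathbf{x}_2^*$ negative projection onto the leading left eigenvector $\Theta \mathbf{w}^*$ of the switched system. Applying Lemma \ref{lem:RoA} to \eqref{eq:opinion_dynamics_hom} on $\mathcal{G}^{\mathcal{W}}$ with initial condition $\mathbf{x}(0)$, convergence is then governed by the sign of $\langle \Theta \mathbf{w}^*, \mathbf{x}(0) \rangle$ relative to $\pm \varepsilon \| \mathbf{x}(0) \|^2$: the trajectory tends to $\Theta \mathbf{x}_1^*$ when the projection exceeds $\varepsilon \| \mathbf{x}(0) \|^2$ and to $\Theta \mathbf{x}_2^*$ when it is below $-\varepsilon \| \mathbf{x}(0) \|^2$.

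The final step is to transfer the hypotheses stated at $\mathbf{x}_i^*$ to the nearby initial condition $\mathbf{x}(0)$. Since the maps $\mathbf{x} \mapsto \langle \Theta \mathbf{w}^*, \mathbf{x} \rangle$ and $\mathbf{x} \mapsto \varepsilon \| \mathbf{x} \|^2$ are continuous, the strict inequality $|\langle \Theta \mathbf{w}^*, \mathbf{x}_i^* \rangle| > \varepsilon \| \mathbf{x}_i^* \|^2$ together with the prescribed sign of $\langle \Theta \mathbf{w}^*, \mathbf{x}_i^* \rangle$ persist on an open neighborhood of $\mathbf{x}_i^*$; hence for $\mathbf{x}(0)$ sufficiently close to $\mathbf{x}_i^*$ the same strict inequality and sign hold with $\mathbf{x}(0)$ in place of $\mathbf{x}_i^*$. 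This places $\mathbf{x}(0)$ in the set $U_+$ (respectively $U_-$) of Lemma \ref{lem:RoA}, yielding convergence to $\Theta \mathbf{x}_1^*$ (respectively $-\Theta \mathbf{x}_1^*$), which I then rewrite in terms of $\mathbf{x}_i$ to match the claimed limit.

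The main obstacle is sign bookkeeping rather than any hard analysis: one must track that the equilibrium labeling in Lemma \ref{lem:RoA} is tied to the sign of the projection onto $\Theta \mathbf{w}^*$, and rewrite the target $\Theta \mathbf{x}_1^*$ or $\Theta \mathbf{x}_2^*$ as $\pm \Theta \mathbf{x}_i$ using $\mathbf{x}_2^* = -\mathbf{x}_1^*$, so that the two branches $\{\Theta \mathbf{x}_i, -\Theta \mathbf{x}_i\}$ exhaust the nonzero equilibria of $\mathcal{G}^{\mathcal{W}}$ regardless of $i$. A secondary point worth making explicit is that the instantaneous switch leaves the physical state unchanged but replaces the vector field, so $\mathbf{x}(0) \approx \mathbf{x}_i^*$ is the genuine initial condition for the $\mathcal{G}^{\mathcal{W}}$ dynamics, and one should confirm $\mathcal{G}$ (hence $\mathcal{G}^{\mathcal{W}}$) is structurally balanced so that Lemma \ref{lem:RoA} is indeed available.
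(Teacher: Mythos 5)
Your proposal is correct and follows essentially the same route as the paper's proof: transfer the projection inequalities from $\mathbf{x}_i^*$ to the nearby initial condition $\mathbf{x}(0)$ by continuity, use the switching correspondence (Theorem \ref{thm:switch_eq} and Proposition \ref{prop:switch_spectrum}) to identify $\Theta\mathbf{x}_i^*$ as an equilibrium and $\Theta\mathbf{w}^*$ as the relevant left eigenvector of the switched system, and conclude via Lemma \ref{lem:RoA}. Your added bookkeeping (the identity $\langle\Theta\mathbf{w}^*,\Theta\mathbf{x}_j^*\rangle=\langle\mathbf{w}^*,\mathbf{x}_j^*\rangle$ and the explicit check that structural balance is inherited by $\mathcal{G}^{\mathcal{W}}$) is only spelled out more fully than in the paper, which leaves these steps implicit.
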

\begin{proof}
Without loss of generality,\edits{ let $\|\mathbf{x}(0) -\mathbf{x}_{1}^*\| < \mu$ }so that $\langle \Theta \mathbf{w}^*,\mathbf{x}(0)\rangle >  \varepsilon \| \mathbf{x}(0)\|^2$ and $w_{i}^* x_i(0) > 0 $ for all $i \in \mathcal{V}$ (these are true at $\mathbf{x}_1$ by assumption; sufficiently close nearby points will satisfy the conditions by continuity). By Theorem \ref{thm:switch_eq} we know that that for \eqref{eq:opinion_dynamics_hom} on $\mathcal{G}^{\mathcal{W}}$, $\Theta \mathbf{x}_1$ is an equilibrium, and the vector $\Theta \mathbf{w}^*$ is normal to the stable eigenspace at the origin. The theorem follows by Lemma \ref{lem:RoA}.
\end{proof}
Theorem \ref{thm:dynamic_switch} \edits{shows} that instantaneously changing a structurally balanced graph $\mathcal{G}$ to its switching equivalent $\mathcal{G}^{\mathcal{W}}$ \edits{results} in a predictable transition of the system state. Namely, if the number of nodes in $\mathcal{W}$ is small in comparison with the cardinality of $\mathcal{V}$, we expect that all nodes in $\mathcal{W}$ will change sign, and all of the nodes in $\mathcal{V}\setminus \mathcal{W}$ will not. A simulation example of this behavior is shown in Fig. \ref{fig:applications_oneswitch}. The precise number of nodes that can be switched simultaneously to generate this behavior depends on the eigenvector $\mathbf{w}^*$ of the graph adjacency matrix, the value of the equilibrium $\mathbf{x}_1^*$, and the bound $\varepsilon$. \edits{In practice,} it is often sufficient that $|\mathcal{W}| < \frac{1}{2} |\mathcal{V}|$. \edits{For the graph and parameter values in Fig. \ref{fig:feature3}, and the numerical estimate $\varepsilon=0.05$, any combination of 4 or fewer nodes can indeed be switched simultaneously. }

The analysis in this section \edits{reveals} that dynamics \eqref{eq:opinion_dynamics_hom} should be well-behaved if the transition between $\mathcal{G}$ and $\mathcal{G}^{\mathcal{W}}$ \edits{is driven by smooth dynamics, e.g., a suitably designed feedback law. We consider an example of such a smoothly-driven} transition in the following section.


\section{Applications to Multi-Robot Task Allocation}


We illustrate how our method can be \edits{used} to change the \edits{proportion} of robots dedicated to a task, \edits{how it can be decentralized, how it is robust to individual robot failures or additions,} 
and how we can ensure that robots  
\edits{switch  
when} triggered locally.

\paragraph{Task distributions} Many multi-robot applications need  subteams to be assigned to different tasks in a certain proportion (see e.g. \cite{Khamis2015} for a review on multi-robot task allocation). As illustrated in Fig.~\ref{fig:applications_preassigned}, switching transformations can be used to distribute a team of agents among two tasks in predetermined proportion.
\begin{figure}[t]
    \centering
    \includegraphics[width=1\linewidth]{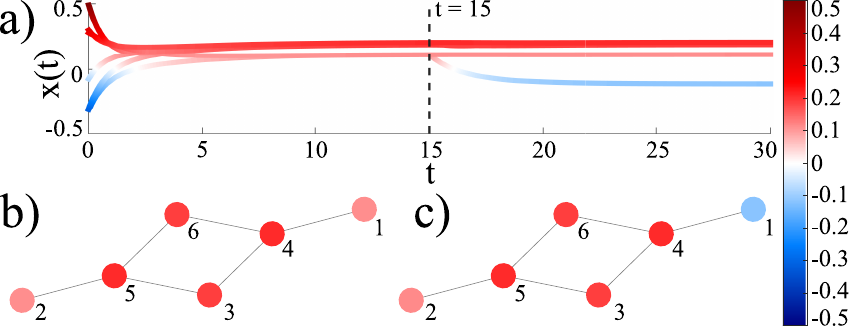}
    \caption{{Applying a switching transformation to agent 1 at t = 15. (a) Time trajectory of the opinion dynamics (b)-(c) Network diagram with the opinion of each agent at $t$ = 10, $t$ = 30. Parameters: $S$ = tanh, $d$ = 1, $\alpha$ = 1.2, $\gamma$ = 1.3, $u$ = 0.294. }}
    \label{fig:applications_oneswitch}
\end{figure}

In scenarios where robots can be divided in subteams, and each subteam can indiscriminately contribute to one of the tasks, our method guarantees a predetermined proportion of agents among tasks, but does not control which subteam is assigned to which task. For example, in Fig. \ref{fig:applications_preassigned}, the team composed of agents 1, 2, and 3 could execute either task 1 or 2. This affords flexibility from the application point of view, where the initial condition on the opinion (e.g., how close a robot is to an area where the demand for a task is abundant) can determine the final distribution of agents.



\paragraph{Local flexibility} Our approach provides flexibility by
letting single agents/robots make individual decisions \edits{in a decentralized way} without disconnecting from the network. \edits{This is important for multi-robot teams that should change their allocation across tasks in response to changing environmental conditions that are observed only by some agents; see \cite{ShinkyuICRA} for the case of globally available environmental cues in a multi-robot trash pick-up problem.} This method is also useful for long-duration autonomy  applications where  team performance should not be affected by failures or individuals that stop contributing to tasks, e.g. to charge batteries \cite{Notomista2021persistification}. See Fig. \ref{fig:applications_oneswitch}, where one agent switches between tasks without affecting the task preference of its neighbors in the graph.




To illustrate suppose that agent $i$ wants to switch options.  Agent $i$ alerts its neighbors that $\theta_i = -1$. 
We define
\begin{equation}
    \tau_{a} \dot{a}_{ik} = -a_{ik} + a_{ik}(0) \theta_{i} \theta_{k}, \label{eq:a_dynamics}
\end{equation}
where $\tau_a$ is a time-scale parameter and $a_{ik}(0) \in \{0,1,-1\}$ is the initial signature of the edge between agents $i$ and $k$. 
As seen in Fig. \ref{fig:feature3}, the dynamics \eqref{eq:a_dynamics} allow locally originated switches to take place simultaneously between neighbors. This property can be useful in applications involving switching cascades, where a robot switching to a new task can trigger its neighbors to switch. This feature is relevant in dynamic task allocation for multi-robot systems where robots can assign themselves to new tasks as a result of interacting with the environment or with their neighboring robots  \cite{Lerman2006analysis}.


\begin{figure}[t]
    \centering
    \includegraphics[width=1\linewidth]{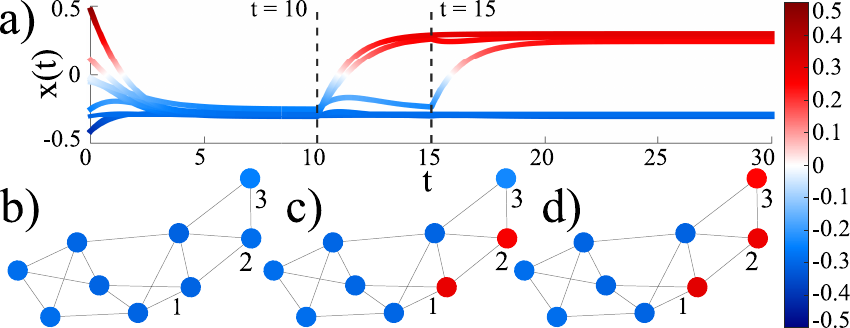}
    \caption{Locally switching agents 1, 2 and 3 
    using \eqref{eq:a_dynamics}. Agents 1, 2 switch at $t = 10$ and agent 3  at $t = 15$. (b)-(d) Network diagram with the opinion of each agent at $t$ = 10, $t$ = 15, $t$ = 30. Parameters: $S$ = tanh, $d$ = 1, $\alpha$ = 1.2, $\gamma$ = 1.3, $u$ = 0.315, $\tau_{a}$ = 0.01}
    \label{fig:feature3}
\end{figure}

\section{Final Remarks}
We analyzed the nonlinear networked opinion dynamics \eqref{eq:opinion_dynamics_hom} on signed graphs and proposed a novel approach for \edits{dynamic and decentralized allocation of} a group of agents across two tasks. In future work, we  aim to generalize the results in Section \ref{sec:dyn_switch} to graphs that are not structurally balanced, to derive an estimate for the $\varepsilon$ bound from Assumption \ref{asm:st_manifold}, and to extend this analysis to the more general multi-option opinion dynamics of  \cite{bizyaeva2022}. 



\bibliographystyle{./bibliography/IEEEtran}
\bibliography{./bibliography/references}

\end{document}